\documentclass[journal=iecred,manuscript=article]{achemso}
\setkeys{acs}{doi = true}
\usepackage{chemformula} 
\usepackage[T1]{fontenc} 

\usepackage{lineno,hyperref}
\modulolinenumbers[5]

\usepackage{amsmath,amssymb,amsfonts}
\usepackage{amsthm}
\usepackage[ruled,linesnumbered]{algorithm2e}
\usepackage{subcaption}

\usepackage{geometry}
\usepackage{color}
\usepackage{easyReview}
\SectionNumbersOn

\newtheorem{remark}{Remark}

\newtheorem{theorem}{Theorem}
\newtheorem{corollary}{Corollary}
\newtheorem{lemma}[theorem]{Lemma}
\newtheorem{definition}{Definition}
\newtheorem{assumption}{Assumption}

\usepackage{multirow}

\newcommand{\Real}{\mathbb R}
\def\d{\mathrm{d}}

\title{Generalized Global Self-Optimizing Control for Chemical Processes: Part II. Objective-guided Controlled Variable Learning Approach}

\author{Chenchen Zhou}
\affiliation[Zhejiang University]
{College of Chemical and Biological Engineering, Zhejiang University, Hangzhou, Zhejiang 310058, China}
\alsoaffiliation[Quzhou Institute]
{Institute of Zhejiang University-Quzhou, Quzhou, Zhejiang 324000, China}

\author{Hongxin Su}
\affiliation[Zhejiang University]
{College of Chemical and Biological Engineering, Zhejiang University, Hangzhou, Zhejiang 310058, China}

\author{Xinhui Tang}
\affiliation[Zhejiang University]
{College of Chemical and Biological Engineering, Zhejiang University, Hangzhou, Zhejiang 310058, China}

\author{Yi Cao}
\affiliation[Zhejiang University]
{College of Chemical and Biological Engineering, Zhejiang University, Hangzhou, Zhejiang 310058, China}

\author{Shuang-Hua Yang}
\affiliation[University of Reading]
{Department of Computer Science, University of Reading, Whiteknights, RG6 6UR, United Kingdom}
\alsoaffiliation[Quzhou Institute]
{Institute of Zhejiang University-Quzhou, Quzhou, Zhejiang 324000, China}
\email{shuang-hua.yang@reading.ac.uk}

\author{Lingjian Ye}
\affiliation[Huzhou University]
{School of Engineering, Huzhou University, Huzhou 313000, China}
\email{lingjian.ye@zjhu.edu.cn}

	\keywords{self-optimizing control; nonlinearity; real-time optimization}

\begin{document}

		\begin{abstract}
			{Self-optimizing control (SOC) aims to maintain near-optimal process operation by judiciously selecting controlled variables (CVs).
			In this series of work, the generalized global SOC ($g^2$SOC) approach is proposed, which extends the concept of SOC to the whole operation space and uses general nonlinear functions to design CVs instead of linear combinations. In the first part of this series work, two numerical approaches for $g^2$SOC are proposed: the optimization-based approach and the regression-based approach, based on a theoretical analysis of the existence of perfect self-optimizing CVs. The CVs designed by the former perform better, but are usually infeasible for large-scale problems.
			In this paper, we propose an algorithm called objective-guided controlled variable learning (OGCVL) that combines the advantages of both and has a better scalability. OGCVL is proposed for efficient CV design that seamlessly integrates symbolic and numerical computation techniques. Finally, the effectiveness of the OGCVL method is verified in two numerical examples. Both examples illustrate show that the OGCVL method is able to achieve good results while maintaining computational efficiency and is also feasible in large-scale problems.}

		\end{abstract}

	\section{Introduction}
	{The optimization of chemical processes under uncertainty remains a critical challenge in the industry. Real-time optimization (RTO) has emerged as a widely adopted technology for maintaining processes close to their optimal operating points under changing conditions \cite{chachuatAdaptationStrategiesRealtime2009}. However, traditional RTO approaches, such as the two-step RTO method \cite{marlin1997real}, often suffer from long waiting times between steady states, resulting in suboptimal operation and economic losses \cite{srinivasan2019a}.}
	
	{Self-Optimizing Control (SOC) has been proposed as an alternative approach to address these limitations \cite{halvorsen:selfopt2003}. SOC extends the concept of controlled variables (CVs) from single measurement variables to functions of measurement variables\cite{halvorsen:selfopt2003}, providing a systematic framework for designing CVs that maintain near-optimal operation without the need for frequent re-optimization.}
	
	{Local SOC methods, based on local approximations of the economic objective and plant model, have been extensively studied \cite{Alstad:optcomb2007,Kariwala:OptComb,KariwalaCao2008}. However, these methods assume minimal disturbance changes and may lead to significant economic losses when the plant operates far from the nominal point. To address this issue, global SOC (gSOC) methods have been developed for wide operating windows \cite{YeCao2015gSOC,YeCao2022gSOC_CCE}, enhancing SOC performance across the entire disturbance space at the cost of increased computational complexity. In addition, local analysis actually limits the exploration of more advanced forms of CVs.  Nonlinear combinations of measurements as control variables have been shown to achieve better performance in nonlinear systems\cite{Jaschke2012Polynomial,YeCao2013IECR,Su2022}. However, these methods are either limited to analyzing specific nonlinear systems or focus solely on algorithm design, without conducting a global analysis of the optimization problem for general nonlinear systems, thus failing to truly elucidate and harness the potential of gSOC.}
	
	{Motivated by those limitations of local analysis, this work performs a global analysis of the steady-state process optimization problem for generalized chemical processes, and consequently proposes the generalized global Self-Optimizing Control ($g^2$SOC) approach. Part I of this work \cite{ye2023} proves the existence of perfect global self-optimizing controlled variables under proper conditions, and two numerical approaches are proposed to approximate the perfect self-optimizing controlled variables, which are regression-based and optimization-based approaches.}
	
	{The regression-based approach can be considered as optimizing first, then training. It involves solving the RTO problem under various uncertainty scenarios to generate training data, i.e., the optimal measurements and control inputs. Subsequently, through a comparative learning approach, deep neural networks are employed to approximate the perfect self-optimizing CVs. This approach is easy to implement, however, since the training process solely focuses on the data itself, neglecting the underlying physical laws that should be satisfied, the closed-loop performance is typically unsatisfactory.}
	
	{The optimization-based approach can be regarded as simultaneously optimizing and training by directly solving a massive-scale nonconvex nonlinear programming problem. Since the model (physical laws) is explicitly considered during the optimization training process, if the solution is successful, the closed-loop performance of the obtained CVs is generally better. However, the main drawback of this approach is that if the considered system is complex or large-scale, it is typically impossible to solve within a limited time. Additionally, due to the nonconvex nature of the problem, successful solution is challenging. Similar issues also arise in the domain of model predictive control policy approximation \cite{li2022} and the solution of partial differential equations \cite{Raissi2019}.}
	
	The objective of this work is to propose a framework that lies between the optimization and regression approaches, capable of combining the advantages of both approaches. This framework aims to design CVs in a computationally efficient manner while ensuring near optimal closed-loop performance.
	In this paper, thorough analysis and comparison of the optimization-based and regression-based approaches are first presented, highlighting the advantages and characteristics of each method. Subsequently, the objective-guided controlled variable learning approach for the $g^2$SOC method is proposed, fully incorporating the strengths of both optimization-based and regression-based methods. The contributions of this paper are summarized as follows:
	\begin{enumerate}
		\item Under more lenient assumptions in Part I of this endeavor, a comprehensive global analysis of the steady-state optimization problem is undertaken, broadening the prerequisites for the existence of perfect global self-optimizing CVs and furnishing a more generalized formulation of perfect global self-optimizing CVs. 
		\item Theoretical comparisons between optimization-based and regression-based approaches are developed.
		\item An objective-guided controlled variable learning framework is proposed to design CVs, which connects symbolic computation and numerical computation, and embeds domain knowledge (process model) as well as the information of the control objective into the CVs model training process.	
		\item The effectiveness of the proposed algorithm is demonstrated through a small and a large-scale example.
	\end{enumerate}

	
	
	
	The remainder of this paper is organized as follows. Section~\ref{sec:RTO} briefly reviews optimization problem of uncertain processes and self-optimizing control. The existence theorem of perfect global SOC CVs is also proposed in this section.
	The two numerical controlled variable design methods, regression and optimization methods, proposed in Part I of this work \cite{ye2023} are deeply analyzed to compare the pros and cons of both in Section~\ref{sec:relationship}.
	In Section \ref{sec:g2SOC}, an objective-guided controlled variable learning framework is proposed to design CVs.
	In Section~\ref{sec:case}, two examples are studied to illustrate the effectiveness of the proposed method, and the final section presents conclusions and outlooks.

	\section{Optimization under uncertainty and perfect global self-optimizing controlled variables}
	\label{sec:RTO}
	\subsection{Optimization Problem of Steady-State Operation under uncertainty}
	Consider a static operation optimization problem with uncertainty:
	\begin{equation}  
		\label{eq:sopt}
		\begin{aligned}
			&\min _{\mathbf{u}} J(\mathbf{u}, \mathbf{d}) \\
			&\text { s.t. }  g_i(\mathbf{u}, \mathbf{d}) \leq 0  ,\;\;\; i=1,\ldots,n_g \\
			& \qquad \mathbf{y} = \mathbf{m}(\mathbf{u}, \mathbf{d}) \\
			& \qquad \mathbf{u} \in \mathcal{U} \quad \mathbf{d} \in \mathcal{D} \\
		\end{aligned}
	\end{equation}
	where $J: \Real^{n_{u}} \times \Real^{n_{d}} \mapsto \Real$ is the steady state economic cost function, $ \mathbf{d} \in \Real^{n_d} $ stands for disturbances, and $ \mathbf{u} \in \Real^{n_u}$ stands for the steady state degrees of freedom, $g_i: \Real^{n_{u}} \times \Real^{n_{d}} \mapsto \Real, i=1,\ldots,n_g$ are the operational constraints,
	$\mathbf{m}: \Real^{n_{u}} \times \Real^{n_{d}} \mapsto \Real^{n_{y}}$ describes the relationship between $ \mathbf{u}, \mathbf{d} $ and system output $ \mathbf{y} \in \Real^{n_{y}} $. 
	$ \mathcal{U} $ and $ \mathcal{D} $ denote the feasible region of $ \mathbf{u} $ and the domain of $ \mathbf{d} $, which are assumed compact. The model mismatch can also be regarded as a disturbance. Furthermore, $ \mathbf{d} $ are assumed as independent and identically distributed (i.i.d.) variables. 
	
	Before further, some necessary definition and assumptions on Problem \eqref{eq:sopt} will be introduced.
	\begin{definition}[Continuity and Semicontinuity of Sets]\cite{dontchev2014a}
		A set $S: \mathbb{R}^m \rightrightarrows \mathbb{R}^n$ is deemed outer semicontinuous (osc) at $\bar{y}$ if it satisfies:
		$$
		\limsup_{y \rightarrow \bar{y}} S(y) \subset S(\bar{y})
		$$
		
		A set $S: \mathbb{R}^m \rightrightarrows \mathbb{R}^n$ is deemed inner semicontinuous (isc) at $\bar{y}$ if it satisfies:
		$$
		\liminf_{y \rightarrow \bar{y}} S(y) \supset S(\bar{y})
		$$
		
		A set $S: \mathbb{R}^m \rightrightarrows \mathbb{R}^n$ is Pompeiu–Hausdorff continuous at $\bar{y}$ if it is simultaneously outer semicontinuous and inner semicontinuous at $\bar{y}$, and $S(\bar{y})$ is a closed set.
	\end{definition}
	\begin{assumption}
		\label{ass:1}
		$\forall$ $g_i$ is a continuous real-valued function on $\mathcal{U} \times \mathcal{D}$, and for any $\mathbf{d}$, each $g_i$ is convex in $\mathbf{u}$.
	\end{assumption}
	
	\begin{assumption}
		\label{ass:2}
		$\forall$ $\mathbf{d} \in \mathcal{D}$, $\exists \mathbf{u} \in \mathcal{U} $ such that $ g_i(\mathbf{u}, \mathbf{d}) \leq 0 ,i=1,\ldots,n_g.$
	\end{assumption}
	
	Assumption \ref{ass:1} imposes the convexity requirement upon the constraint functions, while Assumption \ref{ass:2} stipulates a non-empty feasible domain comprising more than a single point. These assumptions are relatively commonplace in the realm of optimization problems.
	
	\begin{lemma}\cite{dontchev2014a}
		Upon satisfying Assumptions \ref{ass:1} and \ref{ass:2}, the feasible set of Problem \eqref{eq:sopt} can be represented as:
		$$
		S_{\mathrm{feas}}: \mathbf{d} \mapsto \left\{\mathbf{u} | g_i(\mathbf{u}, \mathbf{d}) \leq 0 ,i=1,\ldots,n_g \right\} \quad \mathbf{d} \in \mathcal{D}
		$$
		which is Pompeiu–Hausdorff continuous on $\mathbf{d} \in \mathcal{D}$. This implies the continuity of the feasible domain's boundary.
	\end{lemma}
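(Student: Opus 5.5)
The plan is to verify the two halves of Pompeiu--Hausdorff continuity separately for the map $S_{\mathrm{feas}}(\mathbf{d})=\{\mathbf{u}\in\mathcal{U}\mid g_i(\mathbf{u},\mathbf{d})\le 0,\ i=1,\ldots,n_g\}$: outer semicontinuity together with closedness of the values, and then inner semicontinuity. Throughout I intersect with the compact set $\mathcal{U}$, which I take to be convex as well, as is implicit in the setting.

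\textbf{Closedness and outer semicontinuity.} Fix $\bar{\mathbf{d}}$. Take sequences $\mathbf{d}^k\to\bar{\mathbf{d}}$ and $\mathbf{u}^k\in S_{\mathrm{feas}}(\mathbf{d}^k)$ with $\mathbf{u}^k\to\bar{\mathbf{u}}$. Since $\mathcal{U}$ is closed, $\bar{\mathbf{u}}\in\mathcal{U}$. By Assumption \ref{ass:1} each $g_i$ is jointly continuous, so $g_i(\bar{\mathbf{u}},\bar{\mathbf{d}})=\lim_k g_i(\mathbf{u}^k,\mathbf{d}^k)\le 0$. Hence $\bar{\mathbf{u}}\in S_{\mathrm{feas}}(\bar{\mathbf{d}})$, i.e.\ the graph of $S_{\mathrm{feas}}$ is closed. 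Taking $\mathbf{d}^k\equiv\bar{\mathbf{d}}$ gives closedness of $S_{\mathrm{feas}}(\bar{\mathbf{d}})$, and the general case gives $\limsup_{\mathbf{d}\to\bar{\mathbf{d}}}S_{\mathrm{feas}}(\mathbf{d})\subset S_{\mathrm{feas}}(\bar{\mathbf{d}})$. This step uses only continuity, not convexity.

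\textbf{Inner semicontinuity.} This is the main step and where convexity enters. I will read Assumption \ref{ass:2}, described in the text as a feasible region ``comprising more than a single point'', as a Slater condition: for each $\bar{\mathbf{d}}$ there is $\mathbf{u}^0\in\mathcal{U}$ with $g_i(\mathbf{u}^0,\bar{\mathbf{d}})<0$ for all $i$. Without such strict feasibility the statement can fail. For example, $g(u,d)=u^2-d$ gives $S(0)=\{0\}$ but $S(d)=\emptyset$ for $d<0$. So I will make this interpretation explicit; this is the main obstacle.

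Given $\bar{\mathbf{u}}\in S_{\mathrm{feas}}(\bar{\mathbf{d}})$ and $\mathbf{d}^k\to\bar{\mathbf{d}}$, set $\mathbf{u}_t=(1-t)\bar{\mathbf{u}}+t\mathbf{u}^0$ for $t\in(0,1]$. By convexity of $\mathcal{U}$, $\mathbf{u}_t\in\mathcal{U}$. By convexity of $g_i(\cdot,\bar{\mathbf{d}})$,
\[
g_i(\mathbf{u}_t,\bar{\mathbf{d}})\le(1-t)g_i(\bar{\mathbf{u}},\bar{\mathbf{d}})+t\,g_i(\mathbf{u}^0,\bar{\mathbf{d}})<0 .
\]
For fixed $t$, continuity of $g_i$ in $\mathbf{d}$ yields an index $K(t)$ such that $g_i(\mathbf{u}_t,\mathbf{d}^k)<0$ for all $k\ge K(t)$ and all $i$. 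Thus $\mathbf{u}_t\in S_{\mathrm{feas}}(\mathbf{d}^k)$ for those $k$.

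\textbf{Diagonal choice.} Choose $t_j=1/j$ with thresholds $K_j=K(t_j)$, made strictly increasing. Define $\mathbf{u}^k=\mathbf{u}_{t_j}$ for $K_j\le k<K_{j+1}$; for the finitely many $k<K_1$, pick any point of $S_{\mathrm{feas}}(\mathbf{d}^k)$, which is nonempty by Assumption \ref{ass:2}. Then $\mathbf{u}^k\in S_{\mathrm{feas}}(\mathbf{d}^k)$ for every $k$, and $\mathbf{u}^k\to\bar{\mathbf{u}}$. This proves $S_{\mathrm{feas}}(\bar{\mathbf{d}})\subset\liminf_{\mathbf{d}\to\bar{\mathbf{d}}}S_{\mathrm{feas}}(\mathbf{d})$.

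\textbf{Conclusion.} Combining the two parts, $S_{\mathrm{feas}}$ is Pompeiu--Hausdorff continuous at every $\bar{\mathbf{d}}\in\mathcal{D}$. Because all values lie in the compact set $\mathcal{U}$, this is equivalent to continuity in the Hausdorff distance, which is the precise sense of the remark that the boundary of the feasible domain varies continuously.
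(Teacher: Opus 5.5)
Your proof is correct and is the standard argument behind the paper's proof, which simply defers to Example 3B.4 of \cite{dontchev2014a}: outer semicontinuity comes from the closed graph of the continuous constraints, and inner semicontinuity from convex combinations with a strictly feasible point. That example assumes a Slater point, so you are right that Assumption~\ref{ass:2} as literally stated (mere feasibility) must be strengthened.

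One correction to your aside. $g=u^2-d$ does not show this, because it violates Assumption~\ref{ass:2} itself for $d<0$. A valid counterexample is $g(u,d)=d\,u$ with $\mathcal{U}=[-1,1]$ and $\mathcal{D}=[0,1]$. It satisfies Assumptions~\ref{ass:1}--\ref{ass:2}, yet $S_{\mathrm{feas}}(0)=[-1,1]$ while $S_{\mathrm{feas}}(d)=[-1,0]$ for all $d>0$.
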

	
	\begin{proof}
		See \cite{dontchev2014a}, Example 3B.4.
	\end{proof}
	
	\begin{assumption}
		\label{ass:33}
		$J$ is a continuous real-valued function on $\mathcal{U} \times \mathcal{D}$.
	\end{assumption}
	
	This assumption is rather rudimentary, and does not necessitate the convexity of $J$.
	
	To facilitate exposition, we introduce the mappings for the optimal value and optimal solutions.
	
	The optimal value mapping, acting from $\mathcal{D}$ to $\mathbb{R}$, is defined by:
	$$S_{\mathrm{val}} : \mathbf{d} \mapsto \inf_s \left\{J(\mathbf{u}, \mathbf{d}) \mid \mathbf{u} \in S_{\text {feas}}(\mathbf{d})\right\} \text{ when the inf is finite}$$
	
	The optimal set mapping, acting from $\mathcal{D}$ to $\mathcal{U}$, is defined by:
	$$
	S_{\mathrm{opt}}: \mathbf{d} \mapsto\left\{\mathbf{d} \in S_{\text {feas }}(\mathbf{d}) \mid J(\mathbf{u}, \mathbf{d})=S_{\mathrm{val}}(\mathbf{d})\right\}
	$$
	
	\begin{theorem}[Fundamental Continuity Properties of the Mappings for Optimization Solutions]	\cite{dontchev2014a}
		\label{the:opt}		
		If for all $\mathbf{d} \in \mathcal{D}$, $S_{\mathrm{feas}}$ is non-empty and bounded, and Assumptions \ref{ass:1}, \ref{ass:2}, and \ref{ass:33} are satisfied, then
		
		$S_{\mathrm{val }}$ is continuous on $\mathbf{d} \in \mathcal{D}$, and $S_{\mathrm{opt}}$ is outer semicontinuous on $\mathbf{d} \in \mathcal{D}$.	
	\end{theorem}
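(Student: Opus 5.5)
This is a Berge-type maximum theorem, and I will prove it directly from the Pompeiu–Hausdorff continuity of $S_{\mathrm{feas}}$ given by the preceding Lemma together with compactness. Throughout I use that $\mathcal{U}\times\mathcal{D}$ is compact and that $S_{\mathrm{feas}}(\mathbf{d})$ is nonempty, closed and bounded, hence compact. Indeed, closedness follows from continuity of the $g_i$ (Assumption \ref{ass:1}), and nonemptiness from Assumption \ref{ass:2}. By Assumption \ref{ass:33}, the infimum defining $S_{\mathrm{val}}(\mathbf{d})$ is attained and finite, so $S_{\mathrm{opt}}(\mathbf{d})\neq\emptyset$.

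\textbf{Step 1: upper semicontinuity of $S_{\mathrm{val}}$.} Fix $\bar{\mathbf{d}}$ and a minimizer $\bar{\mathbf{u}}\in S_{\mathrm{opt}}(\bar{\mathbf{d}})$. Take any sequence $\mathbf{d}^k\to\bar{\mathbf{d}}$. Inner semicontinuity of $S_{\mathrm{feas}}$ gives points $\mathbf{u}^k\in S_{\mathrm{feas}}(\mathbf{d}^k)$ with $\mathbf{u}^k\to\bar{\mathbf{u}}$. Then
$$S_{\mathrm{val}}(\mathbf{d}^k)\le J(\mathbf{u}^k,\mathbf{d}^k)\to J(\bar{\mathbf{u}},\bar{\mathbf{d}})=S_{\mathrm{val}}(\bar{\mathbf{d}}),$$
so $\limsup_k S_{\mathrm{val}}(\mathbf{d}^k)\le S_{\mathrm{val}}(\bar{\mathbf{d}})$.

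\textbf{Step 2: lower semicontinuity of $S_{\mathrm{val}}$.} Choose minimizers $\mathbf{u}^k\in S_{\mathrm{opt}}(\mathbf{d}^k)$ and pass to a subsequence realizing $\liminf_k S_{\mathrm{val}}(\mathbf{d}^k)$. Since all $\mathbf{u}^k$ lie in the compact set $\mathcal{U}$, a further subsequence converges, say $\mathbf{u}^k\to\hat{\mathbf{u}}$. Outer semicontinuity of $S_{\mathrm{feas}}$ (or directly, continuity of the $g_i$, since $g_i(\mathbf{u}^k,\mathbf{d}^k)\le 0$ passes to the limit) gives $\hat{\mathbf{u}}\in S_{\mathrm{feas}}(\bar{\mathbf{d}})$. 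Hence
$$\liminf_k S_{\mathrm{val}}(\mathbf{d}^k)=\lim_k J(\mathbf{u}^k,\mathbf{d}^k)=J(\hat{\mathbf{u}},\bar{\mathbf{d}})\ge S_{\mathrm{val}}(\bar{\mathbf{d}}).$$
Steps 1 and 2 together give continuity of $S_{\mathrm{val}}$.

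\textbf{Step 3: outer semicontinuity of $S_{\mathrm{opt}}$.} Let $\mathbf{d}^k\to\bar{\mathbf{d}}$ and $\mathbf{u}^k\in S_{\mathrm{opt}}(\mathbf{d}^k)$ with $\mathbf{u}^k\to\hat{\mathbf{u}}$. As in Step 2, $\hat{\mathbf{u}}\in S_{\mathrm{feas}}(\bar{\mathbf{d}})$. Using continuity of $J$ and of $S_{\mathrm{val}}$ from Steps 1–2,
$$J(\hat{\mathbf{u}},\bar{\mathbf{d}})=\lim_k J(\mathbf{u}^k,\mathbf{d}^k)=\lim_k S_{\mathrm{val}}(\mathbf{d}^k)=S_{\mathrm{val}}(\bar{\mathbf{d}}),$$
so $\hat{\mathbf{u}}\in S_{\mathrm{opt}}(\bar{\mathbf{d}})$. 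This is exactly $\limsup_{\mathbf{d}\to\bar{\mathbf{d}}}S_{\mathrm{opt}}(\mathbf{d})\subset S_{\mathrm{opt}}(\bar{\mathbf{d}})$.

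\textbf{Main obstacle.} The delicate step is Step 1, which needs inner semicontinuity of $S_{\mathrm{feas}}$. Without it the value function can jump upward in the limit. This is precisely where the convexity in Assumption \ref{ass:1} enters via the preceding Lemma, which I will invoke rather than reprove. That Lemma implicitly needs a Slater-type interior point, which Assumption \ref{ass:2} is meant to supply by excluding single-point feasible sets. Everything else uses only compactness and continuity of $J$ and the $g_i$.
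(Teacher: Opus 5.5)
Your argument is correct and is essentially the paper's route: the paper's proof is only a citation of Dontchev--Rockafellar, Theorem 3B.5, whose content is this Berge-type argument. That argument gets upper semicontinuity of $S_{\mathrm{val}}$ from inner semicontinuity of $S_{\mathrm{feas}}$ and lower semicontinuity from closedness plus uniform boundedness, then derives outer semicontinuity of $S_{\mathrm{opt}}$; it draws on the preceding Lemma for continuity of $S_{\mathrm{feas}}$ and on compactness of $\mathcal{U}$ for boundedness.

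One correction to your closing remark. You are right that a Slater point is needed, but Assumption~\ref{ass:2} does not supply one, and excluding singleton feasible sets would not help. Take $\mathcal{U}=\mathcal{D}=[-1,1]$, $g_1=\mathbf{d}\mathbf{u}$, $g_2=\mathbf{u}-1$, $g_3=-\mathbf{u}-1$ and $J=-\mathbf{u}$: every stated hypothesis holds, yet $S_{\mathrm{feas}}(\mathbf{d})$ is $[-1,0]$ for $\mathbf{d}>0$ but $[-1,1]$ at $\mathbf{d}=0$. Hence $S_{\mathrm{val}}$ jumps from $0$ to $-1$ and $S_{\mathrm{opt}}$ is not outer semicontinuous at $\mathbf{d}=0$. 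So your Step~1, and the theorem itself (including the paper's citation), holds only under an added strict-feasibility hypothesis, for instance that for each $\mathbf{d}$ some $\mathbf{u}$ satisfies $g_i(\mathbf{u},\mathbf{d})<0$ for all $i$.
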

	\begin{proof}
		See \cite{dontchev2014a}, Theorem 3B.5.
	\end{proof}
	
	\begin{assumption}
		\label{ass:44}
		When employing a specific numerical method to solve the problem \eqref{eq:sopt}, with appropriate solver settings and solution strategies, an optimal solution can be obtained for all $\mathbf{d} \in \mathcal{D}$.
	\end{assumption}
	
	If Assumption \ref{ass:44} holds, the following mapping  $\pi:\Real^{n_d} \mapsto \Real^{n_u}$ exists:
	\begin{equation}
		\label{eq:uopt_of_d}
		\mathbf{u}^{\mathrm{opt}} = \pi(\mathbf{d}) \in S_{\mathrm{opt}}(d)
	\end{equation}
	where $\mathbf{u}^{\mathrm{opt}}$ denotes the optimal control input, procured through the implementation of a numerical solver.
	Based on Theorem \ref{the:opt}, $S_{\mathrm{opt}}$ is outer semicontinuous on $\mathbf{d} \in \mathcal{D}$, implying that the points contained in $S_{\mathrm{opt}}(\mathbf{d})$ may not be unique. For practical problems, this suggests the potential existence of multiple optimal solutions. Furthermore, $ \pi(\mathbf{d})$ can be viewed as a sampling from $S_{\mathrm{opt}}(\mathbf{d})$, indicating that $\pi$ may be a discontinuous function on ${\mathbf{d}} \in \mathcal{D}$, with no guarantee of the set of optimal solutions being a continuous function of $\mathbf{d}$. 
	
	In summary, for optimization problems satisfying the aforementioned assumptions, the solutions obtained through numerical methods may exhibit discontinuous behavior, with the locations of discontinuities being unknown.

	Thus far, we have elucidated that for general steady-state optimization problems, an inherent mapping relationship exists between the optimal control inputs $\mathbf{u}^{\mathrm{opt}}$ and the disturbances $\mathbf{d}$. However, the optimal solutions cannot be directly obtained due to the unknown and dynamically evolving nature of the disturbances $\mathbf{d}$.
	
	In the traditional two-step RTO approaches, the measurements are utilized to refine the process model, i.e. estimate $\mathbf{d}$, and the updated model is subsequently employed for optimization purposes.
	This approach is similar to feedforward control, and correspondingly, the SOC approaches do not explicitly estimate the uncertainties, which constitutes the distinguishing feature between them.
	The SOC approach furnishes a means of transmuting the optimization problem into a feedback control problem.
	The challenge and core of SOC lie in finding CVs $\mathbf{c}=h(\mathbf{y})$ (functions of the measured variables) which, when kept constant by adjusting the input variables, restore the optimality of plant performance. For instance, the active constraints, $\mathbf{g}_a$, with strict equality maintained throughout the disturbance domain, could constitute a form of SOC CVs.
	
	In fact, both approaches implicitly assume that $\mathbf{d}$ can be estimated from the measured variables $\mathbf{y}$ or that the measurements contain the information pertaining to disturbances.
	In other words, they infer that the information contained in measured variables is sufficient for achieving optimal operation in steady state.
	
	\subsection{Necessary conditions for system to achieve optimal operation through output feedback}
	\label{sec:FCAOSSO}
	In the ensuing discussion, we shall investigate the requisite conditions for attaining optimal steady-state operation predicated upon the system outputs, that is to say, the information contained within the measured variables. To initiate, we proffer the definition of optimal steady-state operation achievable based on system outputs:
	\begin{definition}
		If a mapping $\varpi$ exists such that $\mathbf{u}^{\mathrm{opt}} = \varpi(\mathbf{y})$, then it is said that optimal steady-state operation can be achieved based on system outputs.
	\end{definition}
	In the following, the inverse function theorem is invoked for theoretical analysis of feasibility conditions for achieving optimal steady-state operation based on measured variables.
	\begin{theorem}(Hadamard's global inverse function theorem, Theorem 6.2.3 in \cite{krantz2002implicit}). \label{theo:gInvFunc}
		\\
		Let $f:\;\mathcal{R}^{n_x}\rightarrow \mathcal{R}^{n_x}$ be a $\rm{C}^2$ mapping. Suppose that $f(0)=0$ (without loss of generality) and the Jacobian determinant of $f$, $\mid\frac{\partial f}{\partial x}\mid\neq 0$, is nonzero at each point. Furthermore, suppose that $f$ is proper. Then $f$ is one-to-one and onto.
	\end{theorem}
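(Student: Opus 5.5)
The plan is the classical proof of Hadamard's theorem: treat $f$ as a local diffeomorphism and show, using properness, that it is a covering map of $\mathcal{R}^{n_x}$. Since $\mathcal{R}^{n_x}$ is simply connected, such a covering map can only be a bijection.

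\emph{Step 1 (local invertibility).} Because $\left|\frac{\partial f}{\partial x}\right|\neq 0$ everywhere, the classical local inverse function theorem applies at every point $x$. It gives a neighbourhood $U_x$ on which $f$ is a $\rm{C}^2$ diffeomorphism onto the open set $f(U_x)$. Two consequences follow. First, $f$ is an open map. Second, every fibre $f^{-1}(y)$ is discrete.

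\emph{Step 2 (surjectivity).} The image $f(\mathcal{R}^{n_x})$ is open, since $f$ is an open map. It is also closed, since a proper continuous map into a locally compact Hausdorff space is closed. Concretely, if $f(x_k)\to y$, then the $x_k$ lie in the preimage of a compact neighbourhood of $y$, which is compact. So a subsequence converges to some $x$ with $f(x)=y$. The image is nonempty, and $\mathcal{R}^{n_x}$ is connected, so $f$ is onto.

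\emph{Step 3 (covering property).} Fix $y$. The fibre $f^{-1}(y)$ is compact by properness and discrete by Step 1, hence finite, say $\{x_1,\dots,x_k\}$. Choose pairwise disjoint neighbourhoods $U_i$ of the $x_i$ on which $f$ is a diffeomorphism. The set $f(\mathcal{R}^{n_x}\setminus\bigcup U_i)$ is closed by Step 2 and does not contain $y$. Hence there is a connected neighbourhood $V$ of $y$ with $V\subset\bigcap_i f(U_i)$ and
$$f^{-1}(V)\subset\textstyle\bigcup_i U_i .$$
Then $f^{-1}(V)$ is the disjoint union of the sets $U_i\cap f^{-1}(V)$, and each of these is mapped diffeomorphically onto $V$. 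So $V$ is evenly covered, and $f$ is a covering map.

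\emph{Step 4 (injectivity).} A covering map onto a simply connected, locally path-connected space, with connected total space, is a homeomorphism. Equivalently, one can lift paths directly. Suppose $f(a)=f(b)$. Join $a$ to $b$ by a segment $\gamma$. Then $f\circ\gamma$ is a loop, and it contracts to the constant loop through the straight-line homotopy in $\mathcal{R}^{n_x}$. By the homotopy lifting property, the lift of $f\circ\gamma$ starting at $a$ has the same endpoint as the lift of the constant loop, which is $a$ itself. But that lift is $\gamma$, whose endpoint is $b$. Hence $a=b$. The normalisation $f(0)=0$ plays no real role in the argument.

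The main obstacle is Step 3. This is where properness is essential: without it, fibres could be infinite or points could escape to infinity, and $f$ would fail to be a covering map; $e^z$ viewed as a map of $\mathcal{R}^2$ is a counterexample to injectivity. The decisive point is the closedness of $f$, which lets $V$ be chosen to avoid the image of the complement of $\bigcup U_i$. Alternatively, one could follow Krantz--Parks and use a path-lifting argument that relies on properness to prevent lifts from escaping.
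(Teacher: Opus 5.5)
Your proof is correct. Properness makes $f$ closed, so the image is clopen and $f$ is onto. The fibre $f^{-1}(y)$ is finite, and since $f(\mathcal{R}^{n_x}\setminus\bigcup_i U_i)$ is closed, $y$ has an evenly covered neighbourhood. Finally, the monodromy argument with the endpoint-fixed homotopy $H(s,t)=(1-t)f(\gamma(s))+t f(a)$ forces $a=b$.

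The paper gives no proof of its own; it imports the theorem from Krantz--Parks. So the natural comparison is with the analytic proofs that match the stated hypotheses ($C^2$ and the base point $f(0)=0$). One such route lifts the ray $t\mapsto tp$ by solving $\dot x=\left(\frac{\partial f}{\partial x}\right)^{-1}p$, $x(0)=0$. Properness keeps the solution from escaping on $[0,1]$. This gives a continuous right inverse of $f$ whose image is both open and closed, hence everything, and that yields injectivity. Another route runs a gradient-flow or mountain-pass argument on the proper function $|f(x)-p|^2$. Its critical points are exactly the points of $f^{-1}(p)$, and all are strict local minima. Arguments of this kind flow along a vector field built from $\frac{\partial f}{\partial x}$, and $C^2$ is the convenient hypothesis that makes this field locally Lipschitz. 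I have not checked which of these the cited book actually uses.

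Your covering-space route trades that ODE machinery for standard covering-space theory (homotopy lifting). In exchange, it needs only $C^1$, and really only that $f$ be a proper local homeomorphism. It shows the normalization $f(0)=0$ plays no role, and it carries over essentially unchanged to Hadamard's version for connected manifolds with simply connected target.
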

	For chemical processes, it is common to assume the process model $ \mathbf{m} $ is a $ C^2 $ mapping and the Jacobian determinant of $\mathbf{m}$, $\mid\frac{\partial \mathbf{m}}{\partial [\mathbf{u}, \mathbf{d}]}\mid$, is nonzero, which means all inputs and disturbances have an effect on the output/measurements.
	 	
	Theorem \ref{theo:gInvFunc} shows that there exists a continuously differentiable inverse function for $ \mathbf{y} = \mathbf{m}(\mathbf{u}, \mathbf{d}) $ when $n_y=n_u+n_d$ (equivalent inputs and outputs), denoted by $ [\mathbf{u}, \mathbf{d}] = \mathcal{F}(\mathbf{y}) $. And when $n_y \geq n_u+n_d$, such inverse mappings could be non-unique. 
	\begin{corollary}
		\label{theo:n_x < n_y}
		Let the mapping $ f: \Real^{n_x} \mapsto \Real^{n_y}, n_x < n_y $ be a $\rm{C}^2$ mapping. Suppose that $f(0)=0$ (without loss of generality) and the Jacobian of $f$, $\frac{\partial f}{\partial x}$, is full rank and the rank is equal to $ n_x $ at each point. Furthermore, suppose that $f$ is proper. Then the following mapping exists:
			$$x = \mathcal{F}(y)$$	
	\end{corollary}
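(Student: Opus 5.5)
Since $n_x<n_y$, $f$ is not a map between spaces of equal dimension, so Theorem~\ref{theo:gInvFunc} cannot be applied directly. The plan is to show that $f$ is injective. Then a left inverse $\mathcal{F}$, defined on the image $f(\Real^{n_x})$ by $\mathcal{F}(f(x))=x$, exists and is single-valued. If needed, it can be extended arbitrarily, or continuously, to the rest of $\Real^{n_y}$.

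The first step is to pad $f$ into a square map. Fix a point and choose coordinates in which the rows of $\frac{\partial f}{\partial x}$ can be split. Define $F:\Real^{n_x}\times\Real^{n_y-n_x}\to\Real^{n_y}$ by
\[
F(x,z)=f(x)+N(x)z ,
\]
where the columns of $N(x)$ span a complement of the column space of $\frac{\partial f}{\partial x}(x)$. Then $\frac{\partial F}{\partial (x,z)}(x,0)=\bigl[\frac{\partial f}{\partial x}(x),\,N(x)\bigr]$ is nonsingular, by the full-rank hypothesis. The natural choice is a fixed constant matrix $N$. This preserves $C^2$ smoothness and $F(0,0)=0$, and it makes $F(x,0)=f(x)$.

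The second step is to check the hypotheses of Theorem~\ref{theo:gInvFunc} for $F$: a nonzero Jacobian determinant everywhere, and properness. Once these hold, $F$ is a bijection. Restricting to $z=0$ then shows that $f$ is injective, and $\mathcal{F}$ is obtained as the first $n_x$ components of $F^{-1}$ on the image of $f$. Composing with $F^{-1}$, which is $C^2$ by the local inverse function theorem, also gives regularity of $\mathcal{F}$ along the image.

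The main obstacle is this second step. With a constant $N$, the Jacobian $\bigl[\frac{\partial f}{\partial x}(x),N\bigr]$ need not be nonsingular at all points. Properness of $F$ also does not follow from properness of $f$, because $F$ could escape to infinity along the $z$ directions while cancelling. If a global padding cannot be made to work, I would avoid the padding and argue directly instead.

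The direct argument runs as follows. The constant-rank condition makes $f$ an immersion. A proper immersion is a closed map, and its fibres are compact and discrete, hence finite. So $f$ is a finite covering onto its image, which is a closed embedded submanifold $M$ of dimension $n_x$. The domain $\Real^{n_x}$ is simply connected, but the number of sheets is determined by the topology of $M$, not by the domain. Injectivity holds exactly when $M$ is simply connected, or when $f$ is known to be injective on some connected path-lifting set. This is the point I expect to be hardest to make fully rigorous.

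If it fails, I would weaken the conclusion to the existence of a (possibly multivalued, locally single-valued) mapping $x=\mathcal{F}(y)$. This is in fact all the statement asserts, since it only claims that "the following mapping exists". The weakened version follows directly from the local inverse function theorem applied on charts of the immersion, together with properness, which gives finitely many preimages.
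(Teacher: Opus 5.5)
The step that fails is the claim in your direct argument that a proper immersion is a finite covering onto a closed embedded submanifold $M$. Properness and rank $n_x$ everywhere do give a closed map with finite fibres. They do not give an embedding, because the image may cross itself.

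Take $n_x=1$, $n_y=2$ and $f(t)=(t^2,\,t^3-t)$. It is $C^\infty$ and $f(0)=0$. Its derivative $f'(t)=(2t,\,3t^2-1)$ never vanishes, and $\|f(t)\|\ge t^2$ makes it proper. Yet $f(1)=f(-1)=(1,0)$. The image is a nodal cubic with a transverse crossing at $(1,0)$, so it is not a manifold there. No map with $\mathcal{F}(f(x))=x$ for all $x$ exists.

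So no argument can derive the injectivity you are after from these hypotheses. The two obstacles you found in the padding step are genuine obstructions, not technicalities. Your fallback does not repair this. A finite-valued preimage map with locally single-valued branches is essentially $\mathbf{y}\mapsto f^{-1}(\mathbf{y})$, which exists for any map. The paper, however, uses the corollary to obtain single-valued $\mathcal{F}_d$ and $\mathcal{F}_u$, so that $\pi(\mathcal{F}_d(\mathbf{y}))$ in Eq.~\eqref{eq:uopt_of_y} is well defined.

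The paper's proof is the projection counterpart of your padding. It sets $A=\partial f/\partial x$, asserts that $x\mapsto A^{\top}f(x)$ has Jacobian $A^{\top}A$, and applies Theorem~\ref{theo:gInvFunc}. This silently treats $A$ as constant. With $A$ frozen at a point $x_0$, the Jacobian is $A^{\top}\frac{\partial f}{\partial x}(x)$, which equals $A^{\top}A$ only at $x_0$. Properness of $A^{\top}f$ is also never verified. These are exactly the gaps you flagged.

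In the example, $A=f'(0)=(0,-1)^{\top}$ gives $A^{\top}f(t)=t-t^3$. Its derivative vanishes at $t=\pm 1/\sqrt{3}$, and the function itself vanishes at $t=0,\pm 1$. So your diagnosis is more accurate than the paper's argument, but neither establishes the corollary, which is false in the single-valued sense.

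A correct version needs an extra hypothesis. One option is to assume $f$ injective. A proper injective immersion is a closed embedding, so $f^{-1}$ is $C^2$ on the closed image and extends continuously to $\Real^{n_y}$ by Tietze. The other option is to assume a fixed matrix $B$ such that $B^{\top}f$ meets the hypotheses of Theorem~\ref{theo:gInvFunc}. Then $\mathcal{F}(\mathbf{y})=(B^{\top}f)^{-1}(B^{\top}\mathbf{y})$ is a global $C^2$ left inverse. This second version is what the paper tacitly assumes with $B=A$.
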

	
	\begin{proof}
		For the mapping $ f: \Real^{n_x} \mapsto \Real^{n_y} $ when $ n_x < n_y  $, $ A \triangleq \frac{\partial f}{\partial x} \in \Real^{n_y \times n_x} $ is full rank with $ rank(A)=n_x $. So that 
			$$A^{\top} y = A^{\top}f(x).$$
		And the Jacobian of $A^{\top} f(x)$ is $ A^{\top}A $ which is full rank with $rank( A^{\top}A) = n_x $,
		satisfying assumptions in Theorem \ref{theo:gInvFunc}, then the following equation holds

			$$x = \mathcal{F}_A(A^{\top}y)$$
		where $ \mathcal{F}_A $ is the inverse function to $ A^{\top}y $. So the mapping $ \mathcal{F}_A(A^{\top}.) $ is a selection of $ \mathcal{F} $.
	\end{proof}
%
	
	In any case, where  $ n_y \geq n_d +n_u$, and under conditions of Theorem~\ref{theo:gInvFunc}, there exit inverse mappings that can precisely compute $ \mathbf{u} $ and $ \mathbf{d} $ using the information of $ \mathbf{y} $, denoted by
	\begin{equation}
		\label{eq:d=f(y)}
		\mathbf{d} = \mathcal{F}_d(\mathbf{y})
	\end{equation}
	\begin{equation}
		\label{eq:u=f(y)}
		\mathbf{u} = \mathcal{F}_u(\mathbf{y})
	\end{equation}
	By substituting Eq.~\eqref{eq:d=f(y)} into Eq.~\eqref{eq:uopt_of_d}, it yields
	\begin{equation}
		\label{eq:uopt_of_y}
		\mathbf{u}^{\mathrm{opt}} = \pi(\mathbf{d}) = \pi(\mathcal{F}_d(\mathbf{y}))
	\end{equation}
	so $\mathbf{u}^{\mathrm{opt}} =\varpi(\mathbf{y})$ exists.
	
	 
	 \begin{theorem}(Necessary conditions for system to achieve optimal operation through output feedback)\\ \label{the:pCVs}
	 	Let the conditions required by Theorem~\ref{theo:gInvFunc}, Assumption \ref{ass:1}, \ref{ass:2}, \ref{ass:33} and \ref{ass:44} be satisfied, then the optimal steady-state operation could be achieved based on the information contained in measurements.
	 \end{theorem}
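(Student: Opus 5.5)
The plan is to exhibit the map $\varpi$ of the Definition explicitly, as the composition of the solver selection $\pi$ from \eqref{eq:uopt_of_d} with a left inverse of the process model $\mathbf{m}$. The argument has three steps.

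\emph{Step 1: the map $\pi$ is well defined.} By Assumptions \ref{ass:1}, \ref{ass:2} and \ref{ass:33}, and since $\mathcal{U}$ is compact so that $S_{\mathrm{feas}}(\mathbf{d})$ is bounded and nonempty, Theorem~\ref{the:opt} applies. It shows that $S_{\mathrm{val}}$ is finite and continuous, and that $S_{\mathrm{opt}}(\mathbf{d})$ is a nonempty closed set for every $\mathbf{d}\in\mathcal{D}$. Here nonemptiness comes from Weierstrass: $J$ is continuous and the feasible set is compact. Assumption \ref{ass:44} then guarantees that the solver returns an element of $S_{\mathrm{opt}}(\mathbf{d})$ for every $\mathbf{d}$. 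This defines the selection $\pi:\mathcal{D}\to\mathcal{U}$ with $\pi(\mathbf{d})\in S_{\mathrm{opt}}(\mathbf{d})$. No continuity of $\pi$ is needed, because the Definition only asks for existence of a mapping.

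\emph{Step 2: recover $\mathbf{d}$ from $\mathbf{y}$.} I distinguish two cases.
\begin{itemize}
\item If $n_y=n_u+n_d$, Theorem~\ref{theo:gInvFunc} applied to $f=\mathbf{m}$, with $x=[\mathbf{u},\mathbf{d}]$, gives a bijection. Its inverse $\mathcal{F}$ yields $\mathcal{F}_d$ and $\mathcal{F}_u$ as its coordinate blocks, as in \eqref{eq:d=f(y)} and \eqref{eq:u=f(y)}.
\item If $n_y>n_u+n_d$, I invoke Corollary~\ref{theo:n_x < n_y}, which supplies a mapping $\mathcal{F}$ with $\mathcal{F}(\mathbf{m}(\mathbf{u},\mathbf{d}))=[\mathbf{u},\mathbf{d}]$.
\end{itemize}
In both cases $\mathcal{F}$ is a left inverse on the image $\mathbf{m}(\mathcal{U}\times\mathcal{D})$, so $\mathcal{F}_d(\mathbf{m}(\mathbf{u},\mathbf{d}))=\mathbf{d}$.

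\emph{Step 3: compose.} Define $\varpi=\pi\circ\mathcal{F}_d$ on the set of attainable measurements. For any operating point $(\mathbf{u},\mathbf{d})$ with measurement $\mathbf{y}=\mathbf{m}(\mathbf{u},\mathbf{d})$, we get $\varpi(\mathbf{y})=\pi(\mathbf{d})=\mathbf{u}^{\mathrm{opt}}$, exactly as in \eqref{eq:uopt_of_y}. So optimal steady-state operation is achievable from the outputs.

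\emph{Main obstacle.} The delicate point is Step 2 in the non-square case. The matrix $A$ in the corollary's proof depends on the point, so the composite $A^{\top}\mathbf{m}$ with a fixed $A$ need not have a nonsingular Jacobian globally. What I actually need is only injectivity of $\mathbf{m}$ on $\mathcal{U}\times\mathcal{D}$, because then a left inverse exists set-theoretically. I would argue this in two ways. First, I would take it directly from the corollary as stated, since it is an earlier result. As a backup, I would note that if $\mathbf{m}(x_1)=\mathbf{m}(x_2)$, then selecting any $n_x$ coordinates of $\mathbf{y}$ whose restriction satisfies the hypotheses of Theorem~\ref{theo:gInvFunc} forces $x_1=x_2$. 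A secondary subtlety is that Theorem~\ref{theo:gInvFunc} is stated on all of $\Real^{n_x}$, whereas $\mathbf{m}$ lives on the compact set $\mathcal{U}\times\mathcal{D}$. I would treat the theorem's conditions as holding for a proper $C^2$ extension of $\mathbf{m}$, as the statement's hypothesis ``conditions required by Theorem~\ref{theo:gInvFunc}'' allows, and then restrict.
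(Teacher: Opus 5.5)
Your proposal is correct and follows the paper's proof: $\pi$ exists by the assumptions, $\mathcal{F}_d$ exists by Theorem~\ref{theo:gInvFunc} (and by the corollary when $n_y>n_u+n_d$), and $\varpi=\pi\circ\mathcal{F}_d$ exactly as in \eqref{eq:uopt_of_y}; your version only spells out details the paper leaves implicit. Your worry about the non-square case is justified, because the corollary is false as stated ($t\mapsto(t^2,\,t^3-t)$ is a proper $C^2$ map with nonvanishing derivative, yet it sends $t=1$ and $t=-1$ to the same point), so in that case only your backup is sound, namely an explicitly assumed square subsystem satisfying Theorem~\ref{theo:gInvFunc}; the square case, which is what the hypothesis literally requires, is fully rigorous.
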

 	\begin{proof}
 		If Assumption \ref{ass:1}, \ref{ass:2}, \ref{ass:33} and \ref{ass:44} are satisfied, the mapping $\pi$ in Eq.~\eqref{eq:uopt_of_d} exists.
 		If the conditions required by Theorem~\ref{theo:gInvFunc} hold for process model $ \mathbf{m} $, then the mapping $\mathcal{F}_d$ in Eq.~\eqref{eq:d=f(y)} and the mapping $\mathcal{F}_u$ in Eq.~\eqref{eq:u=f(y)} exist.
 		Then $\mathbf{u}^{\mathrm{opt}} $ can be represented by the function of $ \mathbf{y} $, like Eq.~\eqref{eq:uopt_of_y}. 
 		
 		Obviously, keeping $ \mathbf{u} =\mathbf{u}^{\mathrm{opt}} $ by adjusting $ \mathbf{u} $ will achieve the optimal steady-static operation. And $\mathbf{u}^{\mathrm{opt}} $ could be represented by the functions of $ \mathbf{y} $.
 	\end{proof}
	\begin{remark}
		This condition cannot be satisfied if the measurement noise is also considered as a disturbance, since the number of measurements $n_y$ must be smaller than the number of disturbances $n_d$. Because a steady state process is considered, one way to do this is to consider noise reduction of the measured data using some filters, as further described in the subsequent case study.
		Another perspective is that measurement noise does not affect the optimal control inputs and therefore there is no need to include this information in the controlled variables. This work focuses on the ideal, noise-free case, the extension to noisy measurements would require these additional considerations.
	\end{remark}
	\subsection{Perfect global self-optimizing controlled variables}
	Theorem~\ref{the:pCVs} has delineated the necessary conditions for system outputs to achieve optimal operation through feedback.
	Optimal operation is rendered possible only if the output of the system contains information about all disturbances that impact the objective.
	Therefore, when the conditions of this theorem are satisfied, both self-optimizing control and RTO can theoretically attain optimal operation.
	However, the key distinction between SOC and RTO lies in the former being a feedback control strategy and the latter being a feedforward control strategy.
	The SOC strategy aims to achieve optimization through feedback control by selecting appropriate CVs. It does not explicitly estimate uncertainty.
	The feedback controller design of SOC is segregated from the CVs design. In essence, the CVs represent an invariant feature of the steady-state optimal operation of a system. SOC transfers an optimization problem into a tracking control problem, which renders the online computation cost for SOC lower than RTO. However, this creates additional requirements for SOC—Closed-loop achievability.
	
	However, in traditional SOC methods, this factor is usually not taken into account. They only consider the economic performance of the close loop system. Commonly, performance evaluation metrics can be quantified as either the worst-case loss or the average loss, as expressed by the following equations:
	\begin{equation}
		L_{c, \text{w c}}=\underset{\mathbf{d} \in \mathcal{D}}\max  L_c
	\end{equation}
	\begin{equation}
		L_{c, \text{a v}}=\underset{\mathbf{d} \in \mathcal{D}}{\mathbb{E}}\left[L_c\right]
	\end{equation}
	Here, the symbol $ \mathbb{E}[.] $ denotes the expectation operator. The loss function $ L_{c} $ represents the cost incurred due to the controlled variable $ c $ at a fixed setpoint value, considering a given level of disturbance and measurement noise.	$L_{c, \text{w c}}$ denotes the worst case loss and $L_{c, \text{a v}}$ denotes the average loss.
	
	However, not only should the closed-loop economic loss be considered, but also CVs should be easy to control, that is, insensitive to noise and not be closely correlated.\cite{skogestad:book}
	Next, we will use an example to illustrate the consequences of only considering closed-loop economic losses. 
%
	Here it is assumed that there is only one steady-state degree of freedom. And the controlled variable is designed as:
	\begin{equation}
		\label{eq:c=J-Jopt}
		{c}_J = J(\mathbf{u}, \mathbf{d}) -J^{\mathrm{opt}}(\mathbf{d})
	\end{equation}
	Based on Theorem \ref{the:pCVs}, it could also be represented using measured variables by substituting Eq.~\eqref{eq:d=f(y)} and Eq.~\eqref{eq:u=f(y)} into Eq.~\eqref{eq:c=J-Jopt}, it yields
	$$
	{c}_J = J(\mathcal{F}_u(\mathbf{y}),\mathcal{F}_d(\mathbf{y}))-J^{\mathrm{opt}}(\mathcal{F}_d(\mathbf{y})).
	$$
	It is obviously to get that $ L_{\mathbf{c}_J , w c} $ and $ L_{\mathbf{c}_J ,av} $ both are zero when $ {c}_J  $ is kept at zeros, which may not be a good CV.
	
	As for any feasible disturbance, controlled variables have to satisfy the conditions
	\begin{equation}
			\delta {c}_J=\frac{\partial {c}_J}{\partial {u}} \delta {u}+\frac{\partial {c}_J}{\partial \mathbf{d}} \delta \mathbf{d}=0.
	\end{equation}

	Regrettably, in cases where no constraints are active, it is observed that near the optimal point, $\frac{\partial \mathbf{c}_J}{\partial {u}}$ is always zero while $\frac{\partial \mathbf{c}_J}{\partial \mathbf{d}}$ is not. Consequently, it is impossible to devise a feedback control law that would maintain the controlled variable, $\mathbf{c}_J$, at zero. In other words, this controlled variable is not amenable to closed loop achievability.
	
	
	Based on the discussion before, we give the definition of perfect global self-optimizing CVs.
	\begin{definition}[Perfect global self-optimizing controlled variables]
		The controlled variables (the function of measurements, $\mathbf{c} = h(\mathbf{y})$) are perfect global self-optimizing controlled variables if they satisfy the following conditions:
		\begin{enumerate}
			\item By controlling $\mathbf{c}$ to constant setpoints, the steady-state closed-loop system operates optimally;
			\item The inputs $\mathbf{u}$ should have a significant effect (gain) on $\mathbf{c}$.
		\end{enumerate}
	\end{definition}
	The first condition guarantees the optimality of the controlled variable, and the second condition guarantees the closed loop achievability of the controlled variable. 
	\begin{remark}
		In this paper, we focus on the steady-state behavior of the controlled variables, neglecting their dynamic characteristics. Within this framework, the perfect global controlled variables can be viewed as a feature that characterizes the optimal operation of a system.
		
		However, it should be noted that we do not take measurement noise into consideration in this study. If measurement noise is treated as a disturbance, the number of disturbances in the system is typically greater than the number of measured variables. As a result, achieving optimal operation through measured variables alone may not be feasible.
	\end{remark}
	
	\begin{theorem}(The existence of Perfect global self-optimizing CVs for the g$^2$SOC approach). \label{theo:perfectCV}
		\\
		Let the conditions required by Theorem~\ref{theo:gInvFunc}, Assumption \ref{ass:1}, \ref{ass:2}, \ref{ass:33} and \ref{ass:44} be satisfied, then the perfect global self-optimizing controlled variables $ \mathbf{c}^{\mathrm{perfect}}(\mathbf{y}) $ exist.
	\end{theorem}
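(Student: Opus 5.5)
The natural candidate is the CV that compares the input actually applied with the optimal input, both reconstructed from the measurements:
\[
\mathbf{c}^{\mathrm{perfect}}(\mathbf{y}) \;=\; \mathcal{F}_u(\mathbf{y}) - \pi\bigl(\mathcal{F}_d(\mathbf{y})\bigr) \;=\; \mathcal{F}_u(\mathbf{y}) - \varpi(\mathbf{y}),
\]
with setpoint $\mathbf{c}=0$. This reuses the constructions behind Theorem~\ref{the:pCVs}. Under Assumptions~\ref{ass:1}, \ref{ass:2}, \ref{ass:33} and~\ref{ass:44}, the selection $\pi$ of Eq.~\eqref{eq:uopt_of_d} exists. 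Under the conditions of Theorem~\ref{theo:gInvFunc}, or Corollary~\ref{theo:n_x < n_y} when $n_y>n_u+n_d$, the inverse maps $\mathcal{F}_u,\mathcal{F}_d$ of Eqs.~\eqref{eq:d=f(y)}--\eqref{eq:u=f(y)} exist. Hence $\mathbf{c}^{\mathrm{perfect}}$ is a well-defined function of $\mathbf{y}$ alone, and it remains to check the two conditions in the definition of perfect global self-optimizing CVs.

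\textbf{Optimality.} Fix any $\mathbf{d}\in\mathcal{D}$ and any steady state $\mathbf{u}$ with $\mathbf{c}^{\mathrm{perfect}}(\mathbf{m}(\mathbf{u},\mathbf{d}))=0$. Because $\mathcal{F}$ inverts $\mathbf{m}$, we have $\mathcal{F}_u(\mathbf{m}(\mathbf{u},\mathbf{d}))=\mathbf{u}$ and $\mathcal{F}_d(\mathbf{m}(\mathbf{u},\mathbf{d}))=\mathbf{d}$. The condition $\mathbf{c}^{\mathrm{perfect}}=0$ therefore reads $\mathbf{u}=\pi(\mathbf{d})\in S_{\mathrm{opt}}(\mathbf{d})$. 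Conversely, $\mathbf{u}=\pi(\mathbf{d})$ gives $\mathbf{c}^{\mathrm{perfect}}=0$, so the setpoint is attainable for every $\mathbf{d}$. Holding $\mathbf{c}^{\mathrm{perfect}}$ at its constant setpoint thus yields optimal steady-state operation over the whole disturbance space, with zero worst-case and zero average loss.

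\textbf{Closed-loop achievability.} Write $\mathbf{c}^{\mathrm{perfect}}$ as a function of $(\mathbf{u},\mathbf{d})$ through $\mathbf{y}=\mathbf{m}(\mathbf{u},\mathbf{d})$; it collapses to $\mathbf{u}-\pi(\mathbf{d})$. The gain from $\mathbf{u}$ to $\mathbf{c}$ is then $\partial\mathbf{c}/\partial\mathbf{u}=I_{n_u}$, which is nonsingular and well-conditioned everywhere, independent of $\mathbf{d}$. This is the contrast with $c_J$ of Eq.~\eqref{eq:c=J-Jopt}, whose gain vanishes at the optimum. For a fixed disturbance, $\mathbf{c}$ responds to $\mathbf{u}$ with unit gain, so a simple integral controller can drive it to zero.

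\textbf{Main obstacle.} The difficulty is regularity. $\pi$ is only a selection from an outer semicontinuous set-valued map (Theorem~\ref{the:opt}) and may be discontinuous, so $\mathbf{c}^{\mathrm{perfect}}$ need not be differentiable in $\mathbf{d}$, and ``gain'' must be interpreted carefully. I would handle this by noting that the gain condition concerns only the dependence on $\mathbf{u}$ at fixed $\mathbf{d}$. In that direction $\mathbf{c}$ is exactly $\mathbf{u}$ minus a constant, because $\mathcal{F}$ is a $C^1$ inverse of $\mathbf{m}$. The discontinuity of $\pi$ in $\mathbf{d}$ does not affect either condition of the definition. Finally, I would remark that any $\mathbf{c}=\Phi(\mathbf{u}-\pi(\mathbf{d}))$ with $\Phi$ a diffeomorphism fixing $0$ is also perfect, which gives the more general family of perfect CVs.
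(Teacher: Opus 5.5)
Your proposal is correct and matches the paper's proof: the paper takes $\mathbf{c}^{\mathrm{perfect}}(\mathbf{y})=A\bigl(\mathcal{F}_u(\mathbf{y})-\pi(\mathcal{F}_d(\mathbf{y}))\bigr)$ with an arbitrary invertible matrix $A$, gets optimality from holding $\mathbf{c}$ at zero, and gets achievability from $\partial\mathbf{c}/\partial\mathbf{u}=A$. This is your construction with $A=I_{n_u}$, and your remark on $\Phi$ generalizes the paper's Remark on the choice of $A$. Your additional point, that the gain condition involves only the $\mathbf{u}$-direction at fixed $\mathbf{d}$ so a possibly discontinuous $\pi$ is harmless, is a useful clarification that the paper leaves implicit.
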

	\begin{proof}
		The function
		$$ \mathbf{c}^{\mathrm{perfect}}(\mathbf{y})=A(\mathbf{u}-\mathbf{u}^{\mathrm{opt}}) =A(\mathcal{F}_u(\mathbf{y})- \pi(\mathcal{F}_d(\mathbf{y}))),$$
		 represents perfect SOC CVs, where $ A \in \Real^{n_u} \times \Real^{n_u}$ is a reversible matrix.
		 Obviously, maintaining $ \mathbf{c}^{\mathrm{perfect}}(\mathbf{y}) $ at zeros can achieve optimal operation, which meets condition 1.
		 And $\frac{\partial \mathbf{c}^{\mathrm{perfect}}(\mathbf{y})}{\partial \mathbf{u}}=A$ meets condition 2.
		The conclusion in Theorem \ref{theo:perfectCV} holds directly by combining \eqref{eq:uopt_of_d}, \eqref{eq:d=f(y)} and \eqref{eq:u=f(y)}, which completes the proof.
	\end{proof}
	\begin{remark}\label{rem:A}
		It is worth to note that the matrix $ A $ could be any reversible matrix or any reversible matrix function with respect to $ \mathbf{y} $, at least it should be reversible around the optimal points $(\mathbf{u}^{\mathrm{opt}},\mathbf{d}) $ in $\{\mathcal{U},\mathcal{D}\}$, and it will not affect the steady-state performance of the CVs. This is because the value of $ A $ does not change the control input to maintain the controlled variable at the set value. That is, we can reasonably design $ A $ according to the requirements of the dynamic characteristics of the system.
	\end{remark}
	
	\section{Two classes of Numerical solution methods of the $g^2$SOC approach}
	\label{sec:relationship}
	Theorem~\ref{theo:perfectCV} has defined the existence conditions  of perfect self-optimizing CVs. However, in practice, it is almost impossible to obtain expressions for perfect SOC CVs analytically. It is desirable to devise numerical solutions to approximate the SOC CVs.
	So the core is to construct the function $ h $  which approximates $  \mathbf{c}^{\mathrm{perfect}} $ with $ \mathbf{y} $.

	
	\begin{figure}[htbp]
		\centering
		\includegraphics[width=0.95\textwidth]{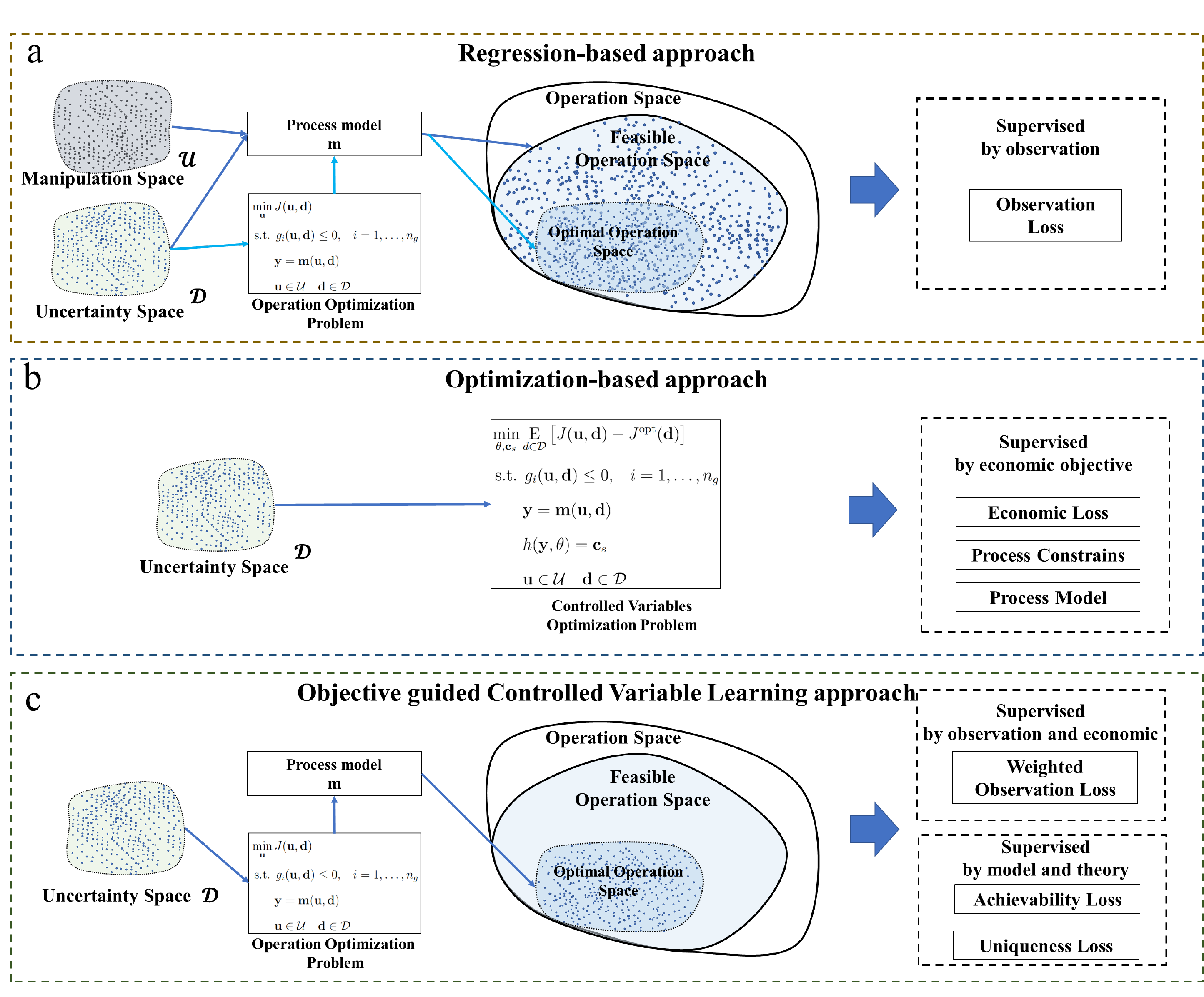} 
		\caption{Illustration of the three approaches (a) Regression-based approach (b) Optimization-based approach (c) Objective guided controlled variable learning approach} 
		\label{fig:overall}
	\end{figure}
	In the first part of this paper\cite{ye2023}, two numerical methods are proposed to approximate the perfect self-optimizing CVs: Regression-based and Optimization-based methods. 
	Fig.~\ref{fig:overall} shows the illustrations of the two methods. The regression-based approach is a type of data-driven approach. This approach requires two parts of data: optimal operation data and non-optimal operation data. In Fig.~\ref{fig:overall}(a), we can see that the regression-based approach samples within the operation and uncertainty spaces, generates non-optimal data through the process model, and then solves operation optimization problem under the associated disturbances sampled before for the optimal operation data. Finally, the perfect self-optimizing controlled variable is approximated by solving the following regression problem. Usually it can be expressed as:
	\begin{equation}\label{eq:regress_u}
		\min _{\theta} \underset{\mathbf{d} \in \mathcal{D}, \mathbf{u} \in \mathcal{U}}{ \operatorname{E}}\left[ \|(h(\mathbf{y},\theta)- (\mathbf{u}-\mathbf{u}^{\mathrm{opt}})) \|^2_2\right] \approx \dfrac{1}{N}\sum_{i=1}^{N} \|(h(\mathbf{y}_i,\theta)- (\mathbf{u}_i-\mathbf{u}^{\mathrm{opt}}_i)) \|^2_2 \
	\end{equation}
	where $ \operatorname{E}[.] $ denotes the expected value of a random variable, $ h:\Real^{n_{y}}\mapsto \Real^{n_{u}} $ is a pre-determined regression model, and always a parametric family of functions, and $\theta$ is the parameter of this regression model. $ N= N_d \times N_u $ represents the number of samples, $ N_d $ and $ N_u $ denote the number of disturbance samples and manipulated variables samples, respectively. And the subscript $ i $ represents the i-th sample.
	
	Correspondingly, as shown in Fig.~\ref{fig:overall}(b), optimization-based method approach only needs to sample in the uncertain space, and then solve the following optimization problem. 
	\begin{equation}  
			\label{eq:soc}
			\begin{aligned}
					&\min _{\theta,\mathbf{c}_s} \underset{d \in \mathcal{D}}{ \operatorname{E}}\left[J(\mathbf{u}, \mathbf{d}) - J^{\mathrm{opt}}(\mathbf{d}) \right] \approx \dfrac{1}{N_d}\sum_{j=1}^{N_d}\left(J(\mathbf{u}, \mathbf{d}_j) - J^{\mathrm{opt}}(\mathbf{d}_j) \right)\\
					&\text { s.t. }  g_i(\mathbf{u}_j, \mathbf{d}_j) \leq 0  ,\;\;\;i=1,\ldots,n_g \\
					& \qquad \mathbf{y}_j = \mathbf{m}(\mathbf{u}_j, \mathbf{d}_j) \\
					& \qquad h(\mathbf{y}_j,\theta) = \mathbf{c}_s \\
					& \qquad \forall j=1,2,...,N_d \\ 
				\end{aligned}
		\end{equation}
	where 
	$ \mathbf{c}_s \in \Real^{n_{u}} $ is the set-points of the CVs. They are constant and usually assigned in advance.
	$ N_d $ represents the number of disturbance  samples, and the subscript $ j $ represents the j-th sample.

	
	Both types of methods can be used to construct perfect SOC CVs, however, they have their own advantages and disadvantages, and we will conduct an in-depth analysis of them next.
	\begin{theorem}(Equivalence)\\
		\label{the:Equ}
		The solution of Eq.~\eqref{eq:regress_u} is also the solution of Eq.~\eqref{eq:soc}.
	\end{theorem}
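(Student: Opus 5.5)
The argument rests on the observation that both problems reach their global minimum of zero at the same function: the perfect CV $\mathbf{c}^{\mathrm{perfect}}$ of Theorem~\ref{theo:perfectCV}, with the choice $A=I$ in Remark~\ref{rem:A}. I would therefore not try to compare the two objectives term by term. Instead, I would show that any parameter value attaining the global minimum in Eq.~\eqref{eq:regress_u} is feasible for Eq.~\eqref{eq:soc} and attains its lower bound. This is the sense in which I read the statement: a regression solution that drives the regression loss to zero is also an optimizer of the optimization-based problem. I will assume the model class $h(\cdot,\theta)$ is rich enough to contain $\mathbf{c}^{\mathrm{perfect}}$, which is the same realizability assumption implicit in both approaches.

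The first step is the regression problem. The objective in Eq.~\eqref{eq:regress_u} is a nonnegative expectation, so its infimum is at least $0$. Under the conditions of Theorem~\ref{theo:perfectCV}, the choice $h(\mathbf{y},\theta^\ast)=\mathcal{F}_u(\mathbf{y})-\pi(\mathcal{F}_d(\mathbf{y}))=\mathbf{u}-\mathbf{u}^{\mathrm{opt}}$ makes the integrand vanish identically. Hence the minimum is $0$, and any minimizer $\theta^\ast$ satisfies $h(\mathbf{y},\theta^\ast)=\mathbf{u}-\pi(\mathbf{d})$ for almost every sample $(\mathbf{u},\mathbf{d})$. For the finite-sample version, this holds at every sample point.

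The second step is to pass to Eq.~\eqref{eq:soc} with $\theta=\theta^\ast$ and $\mathbf{c}_s=0$. The constraint $h(\mathbf{y}_j,\theta^\ast)=0$ together with $\mathbf{y}_j=\mathbf{m}(\mathbf{u}_j,\mathbf{d}_j)$ forces $\mathbf{u}_j=\pi(\mathbf{d}_j)$. This uses the injectivity of $\mathbf{m}$ from Theorem~\ref{theo:gInvFunc}, so that $\mathbf{u}_j$ is recovered as $\mathcal{F}_u(\mathbf{y}_j)$ and the constraint determines $\mathbf{u}_j$ uniquely. Because $\pi(\mathbf{d}_j)\in S_{\mathrm{opt}}(\mathbf{d}_j)\subset S_{\mathrm{feas}}(\mathbf{d}_j)$, the constraints $g_i\le 0$ hold. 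Each term then satisfies $J(\mathbf{u}_j,\mathbf{d}_j)-J^{\mathrm{opt}}(\mathbf{d}_j)=0$. Conversely, for any feasible point of Eq.~\eqref{eq:soc}, each term is nonnegative by the definition of $S_{\mathrm{val}}$, so $0$ is a lower bound. Therefore $(\theta^\ast,0)$ is a global minimizer of Eq.~\eqref{eq:soc}, which establishes the theorem.

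The main obstacle is the second step. It requires that the regression minimizer reproduce $\mathbf{u}-\mathbf{u}^{\mathrm{opt}}$ exactly at the disturbance samples used in Eq.~\eqref{eq:soc}, not merely in an averaged or $L^2$ sense. To handle this, I would first assume that the regression residual is exactly zero, which is available by realizability. I would also take the disturbance samples of Eq.~\eqref{eq:soc} to be among those of Eq.~\eqref{eq:regress_u}, with the optimal pair $(\pi(\mathbf{d}_j),\mathbf{d}_j)$ included among the regression samples. I would then remark that the converse direction fails: Eq.~\eqref{eq:soc} admits many other optimal $h$, for instance $A(\mathbf{y})(\mathbf{u}-\mathbf{u}^{\mathrm{opt}})$ or $c_J$-type functions. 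This explains why the theorem is stated in one direction only.
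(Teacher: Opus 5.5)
Your core idea matches the paper's. The paper's proof only observes that the perfect CV $\mathcal{F}_u(\mathbf{y})-\pi(\mathcal{F}_d(\mathbf{y}))$ of Theorem~\ref{theo:perfectCV} gives zero loss in both \eqref{eq:regress_u} and \eqref{eq:soc}, so it is a common minimizer. You aim for a stronger claim: that every zero-loss regression minimizer $\theta^\ast$ solves \eqref{eq:soc}. Your lower bound, $J(\mathbf{u}_j,\mathbf{d}_j)-J^{\mathrm{opt}}(\mathbf{d}_j)\ge 0$ on the feasible set, is a useful detail the paper leaves implicit.

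However, one step in your second stage does not hold as justified. To conclude from $h(\mathbf{m}(\mathbf{u}_j,\mathbf{d}_j),\theta^\ast)=0$ that $\mathbf{u}_j=\pi(\mathbf{d}_j)$, you need $h(\mathbf{m}(\mathbf{u},\mathbf{d}_j),\theta^\ast)=\mathbf{u}-\pi(\mathbf{d}_j)$ for \emph{every} $\mathbf{u}\in\mathcal{U}$. Your first step does not give this:
\begin{itemize}
\item In the finite-sample version, it gives the identity only at the regression samples with disturbance $\mathbf{d}_j$.
\item In the population version, it gives it only almost everywhere in $(\mathbf{u},\mathbf{d})$, which says nothing about the measure-zero slice $\{\mathbf{d}=\mathbf{d}_j\}$.
\end{itemize}
Injectivity of $\mathbf{m}$ does not help. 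It lets you recover $(\mathbf{u},\mathbf{d})$ from $\mathbf{y}$, but it does not control $h(\cdot,\theta^\ast)$ between samples. So $\mathbf{u}\mapsto h(\mathbf{m}(\mathbf{u},\mathbf{d}_j),\theta^\ast)$ may also vanish at non-optimal inputs. This is exactly the multiple-zero problem the paper later addresses with the uniqueness regularizer $L_u$ in \eqref{eq:lu}. For the perfect CV itself, as in the paper's proof, the identity holds on the whole slice by construction, which is why the issue does not arise there.

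The fix is to drop uniqueness, which the statement does not need. In \eqref{eq:soc} the $\mathbf{u}_j$ and $\mathbf{y}_j$ are not functions of $\theta$; they are implicitly decision variables of the NLP alongside $(\theta,\mathbf{c}_s)$. Your assumption that the optimal pair $(\pi(\mathbf{d}_j),\mathbf{d}_j)$ is among the regression samples gives $h(\mathbf{m}(\pi(\mathbf{d}_j),\mathbf{d}_j),\theta^\ast)=0$. Hence $(\theta^\ast,\mathbf{c}_s=0,\mathbf{u}_j=\pi(\mathbf{d}_j))$ is feasible, since $\pi(\mathbf{d}_j)\in S_{\mathrm{feas}}(\mathbf{d}_j)$. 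It has zero cost, so your lower bound makes it a global minimizer.

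If you instead want the closed-loop reading, where $\mathbf{u}_j$ is whatever zero of the CV the loop settles at, uniqueness is genuinely required. It cannot be derived from zero training residual. You would have to assume $h(\cdot,\theta^\ast)$ coincides with $\mathcal{F}_u-\pi\circ\mathcal{F}_d$ on the whole image $\mathbf{m}(\mathcal{U}\times\mathcal{D})$, which amounts to working with the perfect CV directly, as the paper does.
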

	\begin{proof}
		$ \left(\mathcal{F}_u(\mathbf{y})-\pi(\mathcal{F}_d(\mathbf{y}))\right)  $ is the solution of Eq.~\eqref{eq:soc}, and the value of objective of Eq.~\eqref{eq:regress_u} is zero.
		$ \pi(\mathcal{F}_d(\mathbf{y})) $ is also the solution of Eq.~\eqref{eq:soc}, and the value of objective is also zero.
	\end{proof}
	
	According to Theorem~\ref{the:Equ}, the regression method and the optimization method are ideally equivalent, and both can be used to design/approximate the perfect self-optimizing controlled variable. This transforms the original controlled variable design problem from a complex nonlinear programming problem to a machine learning problem.
	
	However, in practice, even under the same parametric approximating model, those approaches' performances are not same. Next, we will introduce two concepts in machine learning, the model capacity and sample representativeness to illustrate the difference between those two approaches in practice.
	
	\begin{remark}[Capacity of models]
		\label{rem:modelCapability}
		Informally, a model’s capacity is its ability to fit a wide variety of functions.\cite{goodfellow2016}
		In practice, model capacity could be controlled by the model structure, for example, the capacity of a neural network can be controlled by the number of nodes and the number of layers.
		And the model capacity needs to alter carefully to avoid either overfit or underfit.
	\end{remark}
	
	\begin{remark}[Representativeness of samples]
		\label{rem:samples}
		Under the same model structure, the efficiency of the model is depended on representativeness of samples. The model usually performs better if the samples better represent the sampling space.
		This means that the way of sampling and the weights among samples can be adjusted to improve the efficiency of the model.	
	\end{remark}
	In practice, both methods have their own advantages and disadvantages:
	\begin{enumerate}
		\item \textbf{Optimization better than Regression},
			\subitem \textbf{Objective Function}. Optimization approach considers the close-loop economic performance, meanwhile, the regression approach only considers minimizing fitting error. Under some approximated model structure, the theoretical achievable performance for the former is always not worse than the latter. This can be interpreted as optimization methods using economic objectives to allocate model capacity differentially, and most of the impact on closed-loop economic performance has gained more attention.
			\subitem \textbf{Sampling Space}. The sampling space of optimization approach is a proper subset of the sampling space of regression approach, which makes the representativeness of the former method samples could be not worse than the latter. In other words, SOC approach weights different samples by considering closed-loop economic performance, and limits the operating space to make the samples more representative.
			\subitem \textbf{Model constrains.} The regression approach is primarily concerned with minimizing training error by fitting the relationship between input and output, without incorporating any structured information or domain knowledge. Consequently, regression-based methods may produce results that violate the laws of physics.
			On the other hand, the optimization approach explicitly integrates the process model, enabling the controlled variables to be designed in accordance with the constraints of the process model. This ensures that the resulting controlled variables adhere to the physical laws governing the process.
		\item \textbf{Regression better than Optimization}, 
			\subitem \textbf{Algorithm feasibility and complexity}. The optimization approach involves the nonlinear process model, which makes Eq.~\eqref{eq:soc} a non-convex nonlinear programming problem which is difficult to solve. Correspondingly, regression approach samples in the space of $\mathcal{U}$ and $\mathcal{D}$, and the process model information is implicitly included in the sample data, which greatly reduces the difficulty of solving the problem. What's more, there are some standard algorithms/toolboxes designed for performing regression, which also makes it much easier to solve.
	\end{enumerate}
	
	The Objective-guided Controlled Variables Learning(OGCVL) approach in Fig.~\ref{fig:overall}(c) has a specially designed loss function and sampling space, in which weighted fitting error can be regarded as a simplified closed-loop economic performance indicator, 
	regular terms that introduce domain knowledge are also added, and we only need to sample in the optimal operation space. The details of the OGCVL approach are introduced in Section~\ref{sec:g2SOC}.
	
	\section{Objective-guided Controlled Variables Learning for $g^2$SOC}
		\label{sec:g2SOC}

	{
	The inspiration for the OGCVL approach arose from a deep analysis of the strengths and limitations of both optimization-based and regression-based methods for designing CVs. Our key insight was recognizing that these two approaches, while theoretically equivalent, differ significantly in their practical implementation and outcomes.
	The optimization-based method, while potentially more accurate, often becomes computationally intractable for complex systems. On the other hand, the regression-based method, though more computationally efficient, may not fully capture the underlying physical constraints and optimality conditions. This realization led us to explore a hybrid approach that could harness the strengths of both methods.
	The breakthrough came when we considered framing the CV design problem within the context of advanced machine learning. This perspective allowed us to identify three critical components that could be tailored to suit the specific requirements of CV design: Sampling Space, Loss Function Design and Machine Learning Model Structure.
	This method is described in detail next.}

	\subsection{The framework of Objective-guided Controlled Variables Learning}
	
	The trade-off between computational efficiency and accuracy also needs to be considered in the field of partial differential equation solving\cite{hartEnvironmentalSensorNetworks2006a}.
	Recently, there has been a surge in the popularity of 'physics-informed neural networks' (PINNs). This class of deep learning algorithms has the unique capability of effectively integrating data with abstract mathematical operators, such as partial differential equations (PDEs). Essentially, these neural networks are trained to solve supervised learning tasks while adhering to any pre-existing laws of physics. Consequently, this approach has been successful in achieving a harmonious balance between fundamental physical principles and observational data.\cite{Raissi2019,karniadakisPhysicsinformedMachineLearning2021}
	The idea of PINNs can be used to bridge data-driven and model-driven approaches\cite{wang2020}.
	Regression-based $g^2$SOC methods can be considered as data-driven methods, and optimization-based $g^2$SOC methods can be considered as model-based methods. Therefore, inspired by the PINNs method, we developed an Objective-guided Controlled Variable Learning(OGCVL) framework for the SOC CVs design problem, to fully utilize both data and models.
	
	\begin{figure}[htbp]
		\centering
		\includegraphics[width=0.9\textwidth]{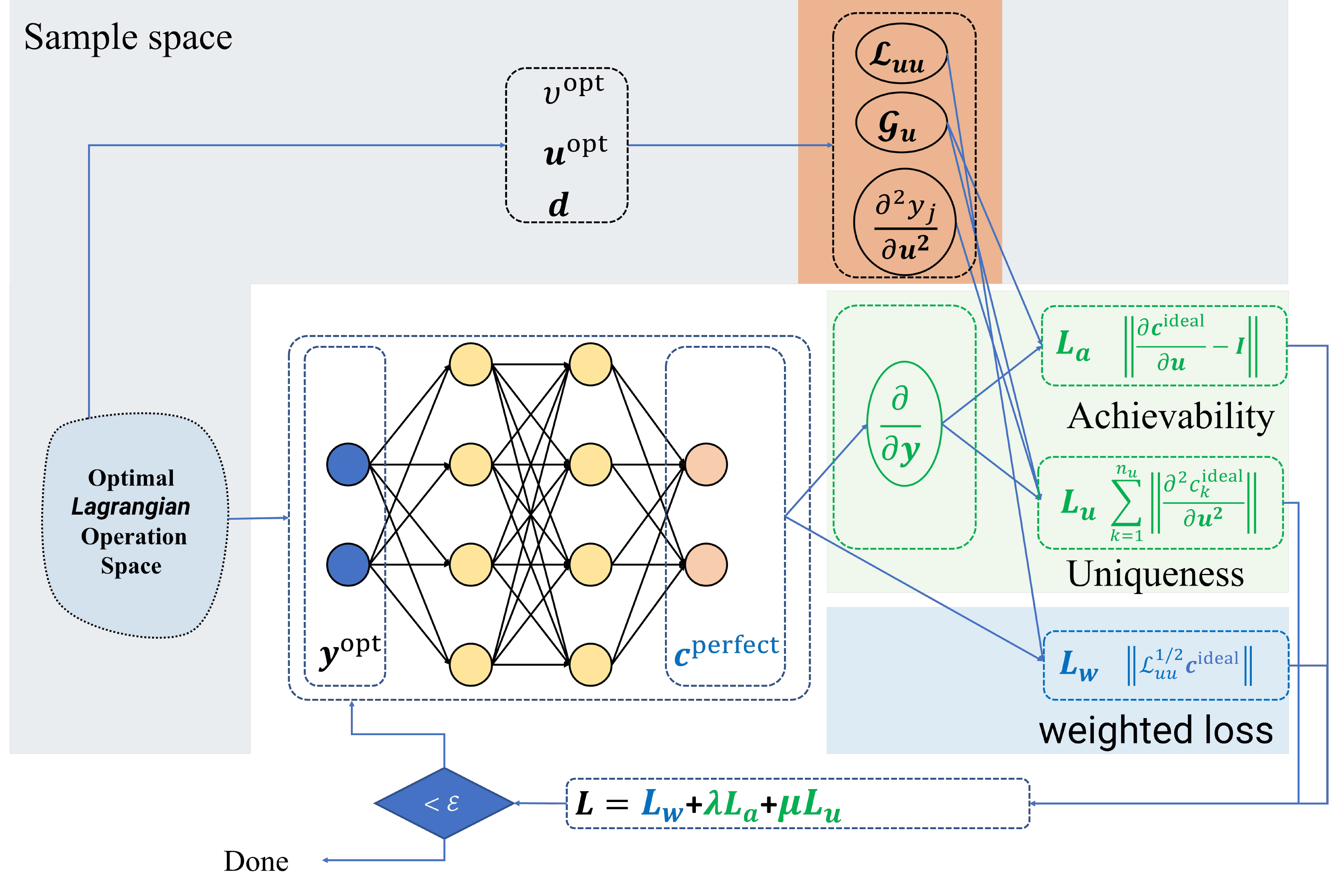} 
		\caption{Flow chart of the Objective-guided Controlled Variables Learning framework} 
		\label{fig:framework}
	\end{figure}
	
	The Objective-Guided Controlled Variable Learning (OGCVL) framework is illustrated in Fig.~\ref{fig:framework}. This framework can be explained component by component as follows:
	\begin{itemize}
		\item \textbf{Optimal Lagrangian Operation Space:} This is represented by the gray ellipse on the left. It serves as the sampling space for the framework, indicating the type of data that needs to be collected. This space includes the optimal inputs $\mathbf{u}^{\text{opt}}$, disturbances $\mathbf{d}$, optimal measurements $\mathbf{y}^{\text{opt}}$, and corresponding optimal Lagrange multiplier $\nu^{\text{opt}}$.
		\item \textbf{Neural Network:} The central part of the figure shows a neural network structure. This network takes $\mathbf{y}^{\text{opt}}$ as input and outputs $\mathbf{c}^{\text{ideal}}$, which represents the ideal controlled variables.
		
		\item \textbf{Derivative Calculations:} The orange box on the right contains $\frac{\partial}{\partial y}$ and $\frac{\partial^2}{\partial y^2}$, representing the first and second derivatives of the neural network output with respect to the input. These are used in the regularization terms.
		
		\item \textbf{Regularization Terms:} The green boxes $L_a$ and $L_u$ represent symbolic regularization terms. $L_a$ enforces the condition that $\|\frac{\partial \mathbf{c}^{\text{ideal}}_k}{\partial \mathbf{u}} - I\|$ should be minimized, addressing closed-loop achievability. $L_u$ minimizes $\sum_{k=1}^{n_u}\|\frac{\partial c^{\text{ideal}}_k}{\partial \mathbf{u}}\|$, ensuring the uniqueness of the CV solution.
		
		\item \textbf{Weighted Loss:} The blue box $L_w$ represents the weighted loss, which links the performance to the observed loss in the closed-loop economy. It is calculated as $\|L_{uu}^{1/2}\mathbf{c}^{\text{ideal}}\|$.
		
		\item \textbf{Total Loss:} The overall loss function $L$ is a weighted sum of $L_w$, $L_a$, and $L_u$, with weights $\lambda$ and $\mu$.
		
		\item \textbf{Optimization Loop:} The framework operates in an iterative manner. If the total loss $L$ is above a threshold $\varepsilon$, the process continues; otherwise, it terminates.
	\end{itemize}
	This framework combines the strengths of data-driven approaches (through the neural network) and model-based optimization (through the regularization terms and weighted loss). It aims to learn controlled variables that are both computationally efficient to obtain and ensure near-optimal closed-loop performance.

	\subsection{Optimal control input learning: Learning in optimal operation space}
	The regression methods need to sample the entire operation space, including $\mathcal{D}$ and $\mathcal{U}$. This is a purely data-driven approach which learn the inverse dynamics relationship, $ \mathcal{F}_u(\mathbf{y})$ and $ \mathcal{F}_d(\mathbf{y}) $, and concurrently the optimal solution mapping, $\pi(\mathbf{d}) $ . On the other hand, optimization methods leverage process models to reduce the sampling space to the uncertainty space $\mathcal{D}$. These methods utilize the degree of freedom of manipulated variables $\mathbf{u}$, together with the CVs function, implying that the control inputs $\mathbf{u}$ are functions of $\mathbf{d}$. Similarly, optimal control inputs $\mathbf{u}^{\mathrm{opt}}$ are functions of $\mathbf{d}$. This raises the question of whether it is possible to approximate the perfect self-optimizing CVs by only sampling the optimal operation space $\{(\mathbf{u}^{\mathrm{opt}}(\mathbf{d}),\mathbf{d})|\mathbf{d}\in\mathcal{D}\}$. 
	
	As depicted in Eq.~\eqref{eq:yopt}, optimal operation space comprises both optimal solution information and system dynamics information. 
	\begin{equation}\label{eq:yopt}
		\begin{aligned}
			\mathbf{y}^{\mathrm{opt}} &= \mathbf{m}(\mathbf{u}^{\mathrm{opt}},\mathbf{d})\\
			\mathbf{u}^{\mathrm{opt}} &= \pi(\mathbf{d})
		\end{aligned}
	\end{equation}
	 Theoretically, if those information could be fully exploited, it would be possible to construct perfect self-optimizing controlled variables. Nevertheless, applying the original problem framework~\eqref{eq:regress_u} directly is infeasible. The following analysis, therefore, describes the necessary problem modification.
	
	Transforming the sampling space of Problem~\ref{eq:regress_u} directly into the optimal operation space yields this expression:	
	\begin{equation}\label{eq:regress_u_opt} 
		\min _{\theta} \underset{\mathbf{d} \in \mathcal{D}}{ \operatorname{E}}\left[ |(h(\mathbf{y}^{\mathrm{opt}},\theta)- (\mathbf{u}^{\mathrm{opt}}-\mathbf{u}^{\mathrm{opt}})) |^2_2\right]. \ 
	\end{equation}
	Unfortunately, this problem is singular since the regression model's output is always zero, resulting in a trivial solution, $\theta = \mathbf{0}$. 
	Because the control inputs $\mathbf{u}$ are usually measurable, it is only necessary to learn $ \pi(\mathcal{F}_d(\mathbf{y}^{\mathrm{opt}})) $  from the optimal operation data. The perfect self-optimizing controlled variable approximation problem could be simplified to the optimal control input $\mathbf{u}^{\mathrm{opt}}$ regression problem. 
	\begin{equation}\label{eq:regress_u_opt2}
		\min _{\theta} \underset{\mathbf{d} \in \mathcal{D}}{ \operatorname{E}}\left[ \|(f(\mathbf{y}^{\mathrm{opt}},\theta)- \mathbf{u}^{\mathrm{opt}}) \|^2_2\right] \
	\end{equation}
	The CVs obtained by solving Problem~\ref{eq:regress_u_opt2} could be presented as $h(\mathbf{y})=\mathbf{u}-f(\mathbf{y},\theta)$.

	\subsection{Achievability and uniqueness regularizer}
	Although singularities are avoided, there is still a challenge when solving this problem directly through the OGCVL approach, since $ \mathcal{F}_u(\mathbf{y}^{\mathrm{opt}}) $ and $ \pi(\mathcal{F}_d(\mathbf{y}^{\mathrm{opt}})) $ can both be the solution of this problem.
	However, only the latter one is the perfect solution. This method avoids singularities, but the result may not be what we really want. 
	Therefore, to address these problems, two regularizers are designed according to the definition of the perfect SOC CVs——achievability regularizer and uniqueness regularizer.
	\subsubsection{Close loop achievability regularizer}
	 To ensure closed-loop achievability of $h$, we need its Jacobian matrix $\mathbf{c}_u=\frac{\partial \mathbf{h}(\mathbf{y})}{\partial \mathbf{u}}\big|_{\mathbf{u}=\mathbf{u}^{\mathrm{opt}}}$ to be full rank, that is $rank(\mathbf{c}_u)=n_u$.
	Since the specific value of $\mathbf{c}_u$ does not affect the steady-state economic performance of the controlled variable (Remark~\ref{rem:A}), we can specify it as an identity matrix $I_{n_u}$ without loss of generality.
	Therefore, we introduce an achievability penalty $ L_a $ for the CVs
	\begin{equation}
		\label{eq:la}
		L_a = \|\mathbf{c}_u -I_{n_u}\|_2^2 = \left\|\frac{\partial h(\mathbf{y})}{\partial \mathbf{y}}\mathcal{G}_{u} -I_{n_u}\right\|_2^2
	\end{equation}
	where $ \mathbf{c}_u $ is calculated by applying the chain rule, $ \mathbf{c}_u = \frac{\partial \mathbf{h}(\mathbf{y})}{\partial \mathbf{y}}\frac{\partial \mathbf{y}}{\partial \mathbf{u}} $. $\mathcal{G}_{u} \triangleq \frac{\partial \mathbf{y}}{\partial \mathbf{u}} $ denotes Jacobian matrices of appropriate sizes, which are evaluated around the optimal point, representing the gain from the control inputs $ \mathbf{u} $ to the measurements $\mathbf{y}$. And for the optimal control input regression approach, achievability penalty can be modified to:
	\begin{equation}
		\label{eq:la_1}
		L_a = \|\mathbf{c}_u -I_{n_u}\|_2^2 = \left\|f_y\mathcal{G}_{u}\right\|_2^2
	\end{equation}
	where $f_y \triangleq \frac{\partial f(\mathbf{y})}{\partial \mathbf{y}}$ denotes the Jacobian matrices of $  f(\mathbf{y}) $ to $ \mathbf{y} $.
	Since it is only necessary to ensure that CVs are achievable at the optimal point, $ L_a $ is calculated at the optimal point. We can thus formulate our regression objective as minimizing the expected squared norm of $h(\mathbf{y}^{\mathrm{opt}},\theta)$, plus a penalty term proportional to the distance between $\mathbf{c}_u$ and $I_{n_u}$, where $\lambda$ is the penalty factor:
	\begin{equation}\label{eq:regress_u_opt3}
		\min_\theta \underset{\mathbf{d} \in \mathcal{D}}{ \operatorname{E}}
		\left[
		 L_o+\lambda L_a
		\right] 
		\approx \dfrac{1}{N_d}\sum_{j=1}^{N_d}\left(
		L_{o,j} +\lambda L_{a,j}
		\right) . 
	\end{equation}
	where $L_o =  \|(f(\mathbf{y}^{\mathrm{opt}},\theta)) - \mathbf{u}^{\mathrm{opt}}\|^2_2 $ denotes the observation loss.

	\subsubsection{Uniqueness regularizers}

	Setting the gradient of the controlled variable to be zero only at the optimal point and non-zero elsewhere guarantees the closed-loop achievability of the controlled variable.
	Nevertheless, this method is effective only when the controlled variable is in the vicinity of the optimal point. Furthermore, there is no certainty that the controlled variable function possesses only one zero point in the entire operation region.
	In this paper, we assume that for each disturbance $\mathbf{d}$, there exists only one optimal solution for the static operation optimization problem. To ensure that our controlled variables have unique control inputs that keep them at set points, we introduce a regularization method that is motivated by Theorem~\ref{the:OneSolutionc=0}.
	
	\begin{theorem}
		\label{the:OneSolutionc=0} 
		Let $\mathfrak{f}(x):\Real^{n_x} \mapsto \Real$ be a continuously differentiable function s.t. $\left\|\mathfrak{f}_{xx}\right\|=\left\|\frac{\partial^2{\mathfrak{f}}}{\partial{x}^2}\right\| \leq \epsilon $ for some $\epsilon>0$. Let $\mathbf{a}$ satisfy the relationship that $\mathfrak{f}(\mathbf{a})=\alpha$ , $ \mathfrak{f}_\mathbf{a}=\frac{\partial{\mathfrak{f}}}{\partial{x}}\big|_{x=\mathbf{a}}=A$ and $A$ is reversible. Then, $\nexists \mathbf{b}$ such that $0<\|\mathbf{b}-\mathbf{a}\|<\frac{2}{\|A^{-1}\|\|\epsilon\|}, \mathfrak{f}(\mathbf{b})=\alpha $.
	\end{theorem}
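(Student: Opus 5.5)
The plan is a second-order Taylor argument around $\mathbf{a}$, combined with the invertibility of the first-order term. First I fix the interpretation. Since $\mathfrak{f}_\mathbf{a}=A$ is required to be reversible, $\mathfrak{f}$ must in effect be a square map $\Real^{n_x}\mapsto\Real^{n_x}$, which is the case of interest, namely the CV map $h$ as a function of $\mathbf{u}$. I also read $\|\mathfrak{f}_{xx}\|\le\epsilon$ as a bound on the second derivative viewed as a bilinear map, so that $\|\mathfrak{f}_{xx}(x)[v,v]\|\le\epsilon\|v\|^2$ for all $x$ and $v$. This needs $\mathfrak{f}$ to be twice differentiable, which the hypothesis on $\mathfrak{f}_{xx}$ implicitly assumes. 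In the scalar case $n_x=1$ the argument below applies verbatim.

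Next, suppose for contradiction that some $\mathbf{b}$ satisfies $\mathfrak{f}(\mathbf{b})=\alpha$ and $0<\|\mathbf{b}-\mathbf{a}\|<2/(\|A^{-1}\|\epsilon)$. Set $v=\mathbf{b}-\mathbf{a}$. I write Taylor's formula with integral remainder along the segment $\mathbf{a}+tv$:
$$\mathfrak{f}(\mathbf{b})-\mathfrak{f}(\mathbf{a})=Av+R,\qquad R=\int_0^1(1-t)\,\mathfrak{f}_{xx}(\mathbf{a}+tv)[v,v]\,\d t .$$
The integral form is what keeps this valid for vector-valued $\mathfrak{f}$, where the mean-value form would not hold. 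Bounding the integrand by $\epsilon\|v\|^2$ and using $\int_0^1(1-t)\,\d t=\tfrac12$ gives $\|R\|\le\tfrac{\epsilon}{2}\|v\|^2$.

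Since $\mathfrak{f}(\mathbf{b})=\mathfrak{f}(\mathbf{a})=\alpha$, the expansion gives $Av=-R$. Inverting $A$ yields
$$\|v\|=\|A^{-1}Av\|\le\|A^{-1}\|\,\|R\|\le\tfrac{\epsilon}{2}\|A^{-1}\|\,\|v\|^2 .$$
Dividing by $\|v\|>0$ gives $\|v\|\ge 2/(\|A^{-1}\|\epsilon)$, which contradicts the assumed radius. So no such $\mathbf{b}$ exists, and $\mathbf{a}$ is the only solution of $\mathfrak{f}=\alpha$ in that punctured ball.

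The step I expect to need the most care is the remainder bound: getting exactly the factor $\tfrac12$ and a norm convention for $\mathfrak{f}_{xx}$ consistent with the statement. The computation itself is routine once $\mathfrak{f}$ is treated as a square vector map and the integral form of the remainder is used.
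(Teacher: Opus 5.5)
Your proof is correct and follows the same route as the paper: assume a second root $\mathbf{b}$, expand $\mathfrak{f}$ to second order about $\mathbf{a}$, use $\mathfrak{f}(\mathbf{b})=\mathfrak{f}(\mathbf{a})$ to cancel the constant term, then apply $A^{-1}$ and the Hessian bound to get $\|\mathbf{b}-\mathbf{a}\|\ge 2/(\|A^{-1}\|\epsilon)$, a contradiction. The only difference is that you use the integral form of the remainder (valid for $\mathfrak{f}\in C^2$) where the paper uses a Lagrange remainder with a single intermediate point $\xi$; yours is the more careful choice, because invertibility of $A$ forces the square vector-valued case, and there a single-$\xi$ mean-value form need not hold.
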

	\begin{proof}
		Proof by contradiction. Assume $\exists \mathbf{b} \neq \mathbf{a}$, such that $0<\|\mathbf{b}-\mathbf{a}\|<\frac{2}{\left\| A^{-1} \right\|\|\epsilon\|}, \mathfrak{f}(\mathbf{b})=\alpha $.
		For function $\mathfrak{f}(\mathbf{x})$, the Taylor series with second order Lagrange remainder about the point $\mathbf{a}$ is 
		$$
		\mathfrak{f}(\mathbf{x})=\mathfrak{f}(\mathbf{a})+\mathfrak{f}_x(\mathbf{a})(\mathbf{x}-\mathbf{a})+\frac{1}{2}(\mathbf{x}-\mathbf{a})^{\top}\mathfrak{f}_{xx}(\xi)(\mathbf{x}-\mathbf{a})
		$$
		where $\xi \in\left\{\mathbf{a}+\beta\left(\mathbf{x}-\mathbf{a}\right) \mid \beta \in[0,1]\right\}$
		Then,
		$$
		\begin{aligned}
			\mathfrak{f}(\mathbf{b}) & =\mathfrak{f}(\mathbf{\mathbf{a}})+\mathfrak{f}_x(\mathbf{a})(\mathbf{b}-\mathbf{a})+\frac{1}{2}(\mathbf{b}-\mathbf{a})^{\top}\mathfrak{f}_{xx}(\xi)(\mathbf{b}-\mathbf{a}) \\
			2(\mathbf{b}-\mathbf{a}) &= -A^{-1}(\mathbf{b}-\mathbf{a})^{\top}\mathfrak{f}_{xx}(\xi)(\mathbf{b}-\mathbf{a})\\
			2\left\|(\mathbf{b}-\mathbf{a}) \right\| &\leq \left\| A^{-1} \right\| \left\| \mathfrak{f}_{xx} \right\| \left\|\mathbf{b}-\mathbf{a} \right\|^2 \\
			&\leq \left\| A^{-1} \right\| \epsilon \left\|\mathbf{b}-\mathbf{a} \right\|^2 \\
			\left\|\mathbf{b}-\mathbf{a} \right\| & \geq \frac{2}{\left\| A^{-1} \right\| \epsilon} \\
		\end{aligned}
		$$
		This contradicts with $\|\mathbf{b}-\mathbf{a}\|<\frac{2}{\|A^{-1}\|\epsilon} $. This completes the proof.
	\end{proof}
	
	Theorem~\ref{the:OneSolutionc=0} states that if a continuously differentiable function $\mathfrak{f}(x)$ satisfies certain conditions around a point $\mathbf{a}$ where its gradient is reversible, then within a certain distance $ \frac{2}{\|A^{-1}\|\epsilon} $ of $\mathbf{a}$ there is no such a point $\mathbf{b}$ such that $\mathfrak{f}(\mathbf{b})$ is the same as $\mathfrak{f}(\mathbf{a})$. We can apply this theorem to our problem by considering the controlled variable function $h(\mathbf{y})$ as a continuously differentiable function of the control input $\mathbf{u}$ and disturbance $\mathbf{d}$.
	
	To apply Theorem~\ref{the:OneSolutionc=0}, we substitute the process model $\mathbf{y}=m(\mathbf{u},\mathbf{d})$ into the CVs function $h(\mathbf{y})$ to get a modified function $\hat{h}(\mathbf{u},\mathbf{d})$. For each disturbance $\mathbf{d}$, we know that the optimal control input $\mathbf{u}^{\mathrm{opt}}$ must satisfy $\hat{h}_k(\mathbf{u}^{\mathrm{opt}},\mathbf{d})=\mathbf{0}, k =1,2,..,n_u$(k-th controlled variable function). In this setting, the Hessian matrix of $\hat{h}_k$ with respect to $\mathbf{u}$, ${c}_{k,uu}$ , plays the role of the Hessian matrix $\mathfrak{f}_{xx}$ in Theorem~\ref{the:OneSolutionc=0}. 
	If the norm of the Hessian matrix $c_{k,uu}$ is sufficiently small, then Theorem~\ref{the:OneSolutionc=0} guarantees that only one control input $\mathbf{u}$ can make the controlled variable function $\hat{h}_k$ equal to $\mathbf{0}$ in the feasible operation space $\mathcal{U}$. Therefore, we can add a regularization term $ L_u $ based on the norm of the Hessian matrix to the regression objective to ensure unique solutions.
	\begin{equation}
		\label{eq:lu}
		L_{u} = \sum_{k=1}^{n_u} \| c_{k,uu} \|_2^2 
	\end{equation}
	$ {c}_{k,uu} $ represents the second derivative of k-th controlled variable ${c}_k$ with respect to $\mathbf{u}$. 
	\begin{equation}
		{c}_{k,{uu}} = \frac{\d c_{k,{u}}}{\d \mathbf{u}} 
		= \frac{\d (1- f_{k,y}\mathcal{G}_u)}{\d \mathbf{u}}
		= \frac{\partial c_{k,{u}}}{\partial \mathbf{u}} + \frac{\partial c_{k,{u}}}{\partial \mathbf{y}} \frac{\partial \mathbf{y}}{\partial \mathbf{u}}= \mathcal{G}_u^{\top}f_{k,yy}\mathcal{G}_u +\sum_{j=1}^{n_y}f_{k,y}(j,:) \frac{\partial^2 {y}_j}{\partial \mathbf{u}^2}
	\end{equation}
	where $f_{k,yy}$ denotes the Hessian matrix of the k-th mapping $f$ with respect to $\mathbf{y}$, $f_{k,y}(j,:) \in \Real$ represents the j-th row of the Jacobian matrix of the k-th mapping $f$ with respect to $\mathbf{y}$, and $\frac{\partial^2 {y}_j}{\partial \mathbf{u}^2}$ denotes
	 the Hessian matrix of the $j$-th measured variables $\mathbf{y}$ to $\mathbf{u}$
	
	\begin{remark}
		If the ReLU function is employed as the activation function for the neural network, the calculation of ${c}_{k,{uu}}$ can be simplified, as in this case, $f_{k,yy}$ = 0, leading to the following expression:
		\begin{equation}
			{c}_{k,{uu}} = \sum_{j=1}^{n_y}f_{k,y}(j,:) \frac{\partial^2 {y}_j}{\partial \mathbf{u}^2}
		\end{equation}
	\end{remark}
	
	
	
	Ultimately, the following problem can be obtained
	\begin{equation}\label{eq:regress_u_opt4}
		\begin{aligned}
			\min_\theta \underset{\mathbf{d} \in \mathcal{D}}{ \operatorname{E}}
			\left[
			L_o	+  \lambda L_a	+\mu L_u
			\right]
			\approx \dfrac{1}{N_d}\sum_{j=1}^{N_d}\left(L_{o,j}	+  \lambda L_{a,j}	+\mu L_{u,j}
			\right) . 
		\end{aligned}		
	\end{equation}
	where $ \mu $ denotes the penalty factor.

	 In this section, we develop the implicit function learning approach to fine-tune the settings of the regression method (Eq.~\ref{eq:regress_u}) by introducing regularization terms that relate to first-order and second-order gradients, which guarantees that the perfect SOC CVs can be successfully attained solely by performing sampling in the optimal space.
	 Furthermore, it also guarantees that the CVs function has a unique zero point to a significant range and the close loop achievability, thereby enhancing the reliability of CVs.
	
	\subsection{Objective-guided loss function design with Lagrangian multipliers}
	
	The optimization approach offers a key advantage of optimizing the closed-loop economic performance directly, whereas the regression approach aims to minimize the fitting error, which bears no physical significance. Some studies \cite{jaschkeOptimalOperationControlling2011,YeCao2013IECR} propose using the inverse square root of the reduced Hessian matrix of economic function $J$ to $\mathbf{u}$, represented as $J^{-1/2}_{uu}$, as a weighting factor to associate the regression fitting error with economic indicators. However, the suggested approach has limitations as it assumes that the correct active set is known and remains stationary, and it only uses the gradient as the CV, without considering more general CVs. To broaden the scope of this idea and to overcome these limitations, our research aims to extend the notion of economic weighting to a wider operational range. We adopt the Lagrangian multipliers method to evade the impact of varying active constraints.
	
	The Lagrangian function for Eq.~\eqref{eq:sopt} can be represented as
	\begin{equation} 
		\label{eq:L_sopt}
		\begin{aligned}
			\mathcal{L}({\mathbf{u}},\mathbf{d})= J({\mathbf{u}},\mathbf{d})+\sum_{i=1}^{n_g}\nu_i g_i({\mathbf{u}},\mathbf{d}), \  \ i=1,2,...,n_g 
		\end{aligned}
	\end{equation}
	where $ \nu_i \geq 0 ,i=1,2,...,n_g $ is the Lagrangian multiplier, and $ n_g $ denotes the number of constrains.
	
	\begin{theorem}[Lagrange multiplier theorem\cite{fuente2000}]
		Let $f: \mathbb{R}^n \rightarrow \mathbb{R}$ be the objective function, $g: \mathbb{R}^n \rightarrow \mathbb{R}^c$ be the constraints function, both belonging to $C^1$ (that is, having continuous first derivatives). Let $x^{\mathrm{opt}}$ be an optimal solution to the following optimization problem such that 
		$\operatorname{rank}\left({\partial g}/{\partial x}|_{x=x^{\mathrm{opt}}}\right)=c<n$:
		$$
		\begin{aligned}
			& \min_x f(x) \\
			& \text { s.t. } g(x)=0
		\end{aligned}
		$$
		Then there exists a unique Lagrange multiplier $\nu^{\mathrm{opt}} \in \mathbb{R}^c$ such that 
		$$\frac{\partial f}{\partial x}\bigg|_{x=x^{\mathrm{opt}}} = {\nu^{\mathrm{opt}}}^{\top} \frac{\partial g}{\partial x}\bigg|_{x=x^{\mathrm{opt}}}.$$
	\end{theorem}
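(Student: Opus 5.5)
We prove the statement in two stages: first existence of a multiplier, then uniqueness. The rank condition $\operatorname{rank}(\partial g/\partial x|_{x^{\mathrm{opt}}})=c<n$ is a linear independence constraint qualification. It will let us use the implicit function theorem to reduce the constrained problem to an unconstrained one in $n-c$ variables.

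\textbf{Step 1 (local reduction).} Write $G=\partial g/\partial x|_{x^{\mathrm{opt}}}\in\Real^{c\times n}$. Since $G$ has full row rank $c$, after permuting coordinates we split $x=(x_B,x_N)$ with $x_B\in\Real^c$ and $\partial g/\partial x_B$ invertible at $x^{\mathrm{opt}}$. Because $g\in C^1$, the implicit function theorem gives a $C^1$ map $\phi$ on a neighbourhood of $x_N^{\mathrm{opt}}$ such that $g(\phi(x_N),x_N)=0$ and $\phi(x_N^{\mathrm{opt}})=x_B^{\mathrm{opt}}$. Moreover
$$\frac{\partial\phi}{\partial x_N}=-\Big(\frac{\partial g}{\partial x_B}\Big)^{-1}\frac{\partial g}{\partial x_N}.$$

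\textbf{Step 2 (first-order condition and definition of $\nu^{\mathrm{opt}}$).} The function $F(x_N)=f(\phi(x_N),x_N)$ has a local minimum at $x_N^{\mathrm{opt}}$, since $x^{\mathrm{opt}}$ is optimal and the graph of $\phi$ lies in the feasible set. Hence $\nabla F(x_N^{\mathrm{opt}})=0$. By the chain rule,
$$f_{x_N}-f_{x_B}\Big(\frac{\partial g}{\partial x_B}\Big)^{-1}\frac{\partial g}{\partial x_N}=0 .$$
We define $\nu^{\mathrm{opt}\top}=f_{x_B}\,(\partial g/\partial x_B)^{-1}$. Then $f_{x_B}=\nu^{\mathrm{opt}\top}\partial g/\partial x_B$ holds trivially, and the displayed identity gives $f_{x_N}=\nu^{\mathrm{opt}\top}\partial g/\partial x_N$. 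Together these give $\partial f/\partial x=\nu^{\mathrm{opt}\top}G$, which is the claimed relation.

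\textbf{Step 3 (uniqueness).} Suppose $\nu_1$ and $\nu_2$ both satisfy $\partial f/\partial x=\nu^{\top}G$. Subtracting gives $(\nu_1-\nu_2)^{\top}G=0$. Since $G$ has full row rank $c$, its rows are linearly independent, so $\nu_1=\nu_2$.

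\textbf{Main obstacle.} The only delicate point is Step 1. We must check that local optimality transfers to the reduced function $F$, which requires that feasible points near $x^{\mathrm{opt}}$ are exactly those on the graph of $\phi$. This is precisely what the implicit function theorem supplies under the rank hypothesis, so the argument goes through once that theorem is invoked carefully. The remaining steps are routine linear algebra.
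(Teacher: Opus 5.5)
Your proof is correct. It is the standard implicit-function-theorem reduction: solve $g=0$ for $c$ coordinates, set the gradient of the reduced objective to zero, read off $\nu^{\mathrm{opt}}$, and get uniqueness from full row rank. The paper gives no proof of its own and simply defers to de la Fuente's textbook, where, to my recollection, the argument is essentially this one.

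One small correction to your closing remark: Step~2 only uses that the graph of $\phi$ lies in the feasible set, which holds by construction. It does not need the converse (true, but extra) fact that every feasible point near $x^{\mathrm{opt}}$ lies on that graph, so there is no real obstacle there.
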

	
	Considering the Lagrange multiplier theorem, Eq.~\eqref{eq:sopt} and Eq.~\eqref{eq:L_sopt} are equivalent, and for each $ \mathbf{u}^{\mathrm{opt}} $ there is one and only one Lagrangian multiplier $ \nu^{\mathrm{opt}} $ corresponding to it.
	Therefore, it can also be understood that there is a mapping  $\Pi:\Real^{n_d} \mapsto \Real^{n_g}$
	\begin{align}\label{eq:nuopt_of_d}
		\nu^{\mathrm{opt}} &= \Pi(\mathbf{d})
	\end{align}

	Then objective of Eq.~\eqref{eq:L_sopt} is approximated locally by a quadratic function around an optimal input $ \mathbf{u}^{\mathrm{opt}} $.
	\begin{equation}\label{eq:dJ}
		\begin{aligned}
			\mathcal{L}(\mathbf{u},\mathbf{\nu}^{\mathrm{opt}}, \mathbf{d}) \approx 
			\mathcal{L}(\mathbf{u}^{\mathrm{opt}},\mathbf{\nu}^{\mathrm{opt}}, \mathbf{d}) 
			+ \mathcal{L}_u(\mathbf{u}  - \mathbf{u}^{\mathrm{opt}}) 
			+ \frac{1}{2}(\mathbf{u}  - \mathbf{u}^{\mathrm{opt}})^\top \mathcal{L}_{uu} (\mathbf{u}  - \mathbf{u}^{\mathrm{opt}})
		\end{aligned}
	\end{equation}
	where $ \mathcal{L}_{u} =  \dfrac{\partial \mathcal{L}}{\partial \mathbf{u}}$ denotes the Jacobian matrix of $ \mathcal{L} $ with respect to $ \mathbf{u} $, 
	$ \mathcal{L}_{uu} =  \dfrac{\partial^2 \mathcal{L}}{\partial \mathbf{u}^2}$ denotes the Hessian matrix of $ \mathcal{L} $ with respect to $ \mathbf{u} $.
	It is noted that the $ \mathcal{L}_{u}|_{\mathbf{u} = \mathbf{u}^{\mathrm{opt}}}=0 $ whether or not there are constraints. Therefore, the loss of the Lagrangian function $ \mathcal{L} $ generated by non-optimal inputs near the optimal operating point can be approximated as:
	\begin{equation}
		\label{eq:dL}
		\Delta\mathcal{L}(\mathbf{u}, \mathbf{d})\triangleq\mathcal{L}(\mathbf{u}, \mathbf{d}) - \mathcal{L}(\mathbf{u}^{\mathrm{opt}}, \mathbf{d})  \approx \frac{1}{2}(\mathbf{u}  - \mathbf{u}^{\mathrm{opt}})^\top \mathcal{L}_{uu} (\mathbf{u}  - \mathbf{u}^{\mathrm{opt}})
	\end{equation}
	For close loop, the CVs $\mathbf{c} = \mathbf{u} - f(\mathbf{y})$ are maintained at 0. 
	Therefore, $ \Delta\mathcal{L}(\mathbf{u}, \mathbf{d}) $ can be further modified to:
	\begin{equation}
		\begin{aligned}
			\Delta\mathcal{L}(\mathbf{u}, \mathbf{d})&\approx \frac{1}{2}
			\left(\mathbf{u}-	f(\mathbf{y}) +	f(\mathbf{y}) - \mathbf{u}^{\mathrm{opt}}  \right)^\top
			\mathcal{L}_{uu} \left(\mathbf{u}-	f(\mathbf{y}) +	f(\mathbf{y}) -\mathbf{u}^{\mathrm{opt}}  \right)\\
			&=\frac{1}{2}\left(	f(\mathbf{y}) - \mathbf{u}^{\mathrm{opt}}  \right)^\top
			\mathcal{L}_{uu} \left(	f(\mathbf{y}) -\mathbf{u}^{\mathrm{opt}}  \right)
		\end{aligned}
	\end{equation}
	
	Thus, by weighting different samples under the guide of objective, Problem~\ref{eq:regress_u_opt4} can be representated as:
	\begin{equation}
		\min_\theta \underset{\mathbf{d} \in \mathcal{D}}{ \operatorname{E}}
		\left[
		L_w+  \lambda L_a	+\mu L_u
		\right]
		\approx \dfrac{1}{N_d}\sum_{j=1}^{N_d}\left(L_{w,j}	+  \lambda L_{a,j}	+\mu L_{u,j}
		\right) 
	\end{equation}	
	where $ L_w $ denotes the weighted observation loss and it is given as fellows
	\begin{equation}
		\label{eq:lw}
		 L_w = \|\mathcal{L}_{uu}^{1/2}(h(\mathbf{y}^{\mathrm{opt}},\theta)) \|^2_2
	\end{equation} 
	
	\begin{remark}
		This formulation utilizes the Hessian matrix information of the Lagrangian function to weight the samples, which has been employed in \cite{su2023,zhou2024a}. Concurrently, Zhou et al. \cite{zhou2024a} have given a proof substantiating that this weighted form yields superior closed-loop performance compared to its unweighted counterpart.
	\end{remark}
	
	To sum up, implementation of the Objective-guided Controlled Variables Learning based $g^2$SOC approach is given in Algorithm 1. 
	
	\begin{algorithm}[h]
		\caption{The OGCVL g$^2$SOC approach}
		\label{algo:OptSOC}
		\KwIn{The formulation of optimization problem~\eqref{eq:L_sopt};A neural network with a given structure $h(\mathbf{y},\theta)$;Initial value $\theta_0$}
		\KwOut{$\mathbf{c}=h(\mathbf{y},\theta_{\text{opt}})$.}
		\BlankLine
		Discretization over $\mathcal D$, a set of discretized points $\tilde{\mathcal X}=\{d_{(i)}\}_{i=1,\ldots,N}$ is generated;
		
		Set suitable values for $\mu$ and $\lambda$
		
		\ForEach{$d_{(i)}$ in $\tilde{\mathcal X}$}
		{ 
			Solving Eq.~\eqref{eq:L_sopt} to obtain $ \mathbf{u}^{\mathrm{opt}}_{(i)} $ and $ \nu^{\mathrm{opt}}_{(i)} $
			
			Computing the corresponding measurements $ \mathbf{y}^{\mathrm{opt}}_{(i)} $ based on the process model $ \mathbf{y} = \mathbf{m}(\mathbf{u}, \mathbf{d}) $ and computing $ \mathcal{G}_{u,(i)} $, and $ \mathcal{L}_{uu,(i)} $
			
		}
		
		k=0
		
		\While{$L>\epsilon$}
		{
			Computing weighted observation loss $ L_w $ (Eq.~\eqref{eq:lw}), achievability loss $ L_a $ (Eq.~\eqref{eq:la}) and uniqueness loss $ L_u $ (Eq.~\eqref{eq:lu}) for $i = 1,2,..,N$
			
			Computing total loss $ L =  \dfrac{1}{N}\sum_{i=1}^{N} (L_{w,i} + L_{a,i} + L_{u,i}) $
			
			$\theta_{k+1}\leftarrow$ Update the parameters of the CVs model $ \theta $ according to the loss $L$ using an optimizer (e.g., Adam)
		}
		$\theta_{\text{opt}}\leftarrow\theta_{k}$ 
	\end{algorithm}

	\section{Case study}
	\label{sec:case}
	{The two case studies presented in this paper serve distinct purposes and demonstrate different aspects of our proposed method:
	\begin{itemize}
		\item \textbf{Case Study 1: Williams-Otto reactor} This is a small-scale case designed to provide a comprehensive comparison between different methods. Its manageable size allows us to implement and compare the optimization-based method, the regression-based method, and our proposed OGCVL method. 
		\item \textbf{Case Study 2: Gold Cyanidation Leaching Process} This is a large-scale, industrially relevant case study.
	\end{itemize}}
	{It should be noted that both examples were chosen primarily to demonstrate the theoretical capabilities of the proposed method. They serve as a proof-of-concept that can be extended to more practical scenarios.}
	
	\subsection{Case 1: Williams-Otto reactor}
	This model is first described in \cite{williams1960reactor}.
	This model consists of one reactor and one heat exchanger. 
	The Williams-Otto reactor converts the raw materials A and B to products P and E, along with by-products C and G, through a series of reactions
	$$\begin{matrix}
		A+B&\rightarrow &C&k_1=k_{10}e^{-6666.7/T_r},k_{10}=1.6599\times{10}^6\\
		B+C&\rightarrow &P+E&k_2=k_{20}e^{-8333.3/T_r},k_{20}=7.2177\times{10}^8\\
		C+P&\rightarrow &G&k_3=k_{30}e^{-11111/T_r},k_{30}=2.6745\times{10}^{12}\\
	\end{matrix}$$	
	The process model could be formulated as,
	\begin{align}
		W\frac{\d x_A}{\d t} &= F_A -  F x_A - W x_A x_B k_1\\
		W\frac{\d x_B}{\d t} &= F_B -  F x_B - W x_A x_B k_1 - W x_B x_C k_2\\
		W\frac{\d x_C}{\d t} &= - F x_C + 2 W x_A x_B k_1 - 2 W x_B x_C k_2 - W x_C x_P k_3\\
		W\frac{\d x_P}{\d t} &= - F x_P + W x_B x_C k_2 - 0.5 W x_P x_C k_3\\
		W\frac{\d x_E}{\d t} &= - F x_E + 2 W x_B x_C k_2\\
		W\frac{\d x_G}{\d t} &= - F x_G + 1.5 W x_P x_C k_3
	\end{align}
	where the mass $ W = 2105~\mathrm{kg} $, the flowrate of feed stream with pure B component $ F_B $ and the reactor temperature $ T_r $ are the manipulated variables. 
	And the level are assumed controlled perfectly such that $ F = F_A+F_B $, where $F_A$ is the feed flowrate of component A and $F$ is the outflow rate.
	
	The objective of this process is to maximize the profits from the valuable products P and E, subject to the constrains on $x_{G_{\max }}$, $x_{A_{\max }}$,  the maximum purity of G and A in the product stream and $F_{min}$, the minimum outflow rate.
	So the steady-state optimization problem is formulated as,
	\begin{equation}
		\begin{array}{cc}
			\min _{T_r, F_B} & -1043.38 x_P F-20.92 x_E F \\
			& \quad+79.23 F_A+118.34 F_B \\
			\text { s.t. } & \\
			& x_G \leqslant x_{G_{\max }} \\
			& x_A \leqslant x_{A_{\max }} \\
			& F \geqslant F_{\min }
		\end{array}
	\end{equation}
	where $ x_{G_{\max }} = 0.08 $, $ x_{A_{\max }}=0.12 $ and $ F_{\min }=4.4~ \mathrm{kg/s}$.
	 
	 In this case, $ F_A $, the flowrate of feed stream with pure A component is the disturbance to the process which is expected to vary between $ 1~\mathrm{kg/s} $ and $ 2~\mathrm{kg/s} $.
	
	\begin{figure}[h]
		\begin{center}
			\includegraphics{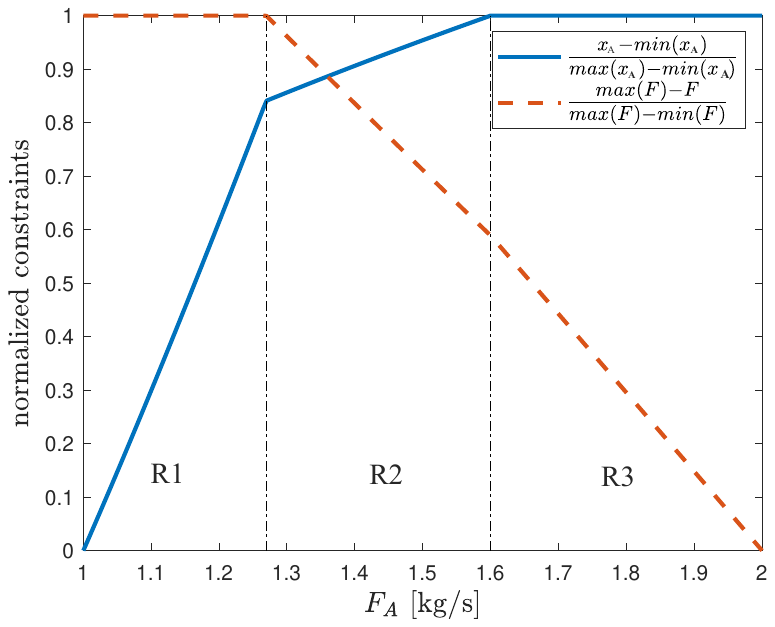} 
			\caption{Variation of active constraints} 
			\label{fig:varyC}
		\end{center}
	\end{figure}
	
	Under the assumed disturbance range of $ F_A \in [1,2] \mathrm{kg/s}$, the purity constraint on $ x_G $ is always active.
	As shown in Fig~\ref{fig:varyC}, the relevant active constraint combinations are 
	\begin{itemize}
		\item $ x_G $ and $ F $ active (R1)
		\item only $ x_G $ active (R2)
		\item $ x_G $ and $ x_A $ active (R3)
	\end{itemize}
	
	Since there are two manipulated variables $ \mathbf{u} = [F_B,T_r] $, firstly, the reactor temperature $ T_r $ is employed to hold $ x_G $ to its upper limit $ x_{G\max} =0.08$ using PI controller.
	The remaining one degree of freedom $ F_B $ will be used to control SOC CV.	
	
	\subsubsection{CV identifications and results}
	
	Firstly, 200 data points are generated using Monte Carlo sampling, for disturbance $ F_A \in [1,2] $. Half of them are used as the training set and the other half as the test set.
	Then, following Algorithm~\ref{algo:OptSOC}	and with the choice of a 1-layer neural network with 10 neurons whose activation function is ReLU, we obtain nonlinear CVs.
	
	\begin{figure}[h]
		\begin{center}
			\includegraphics{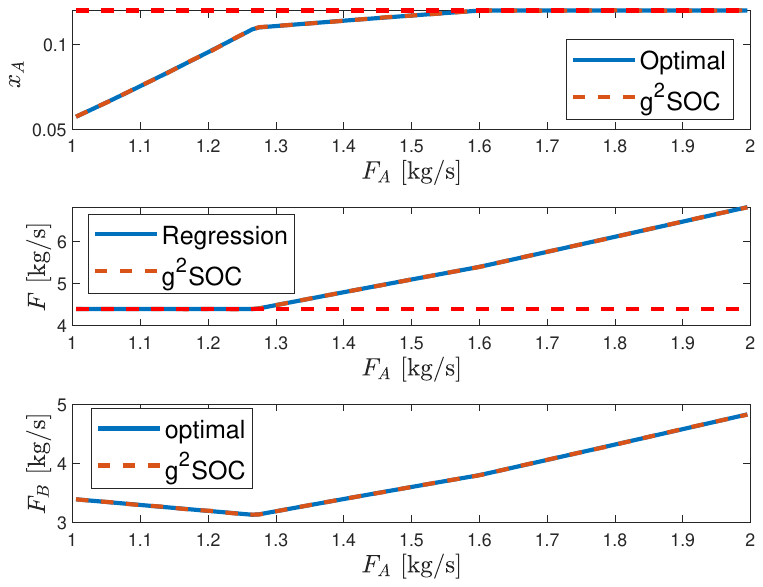} 
			\caption{Close loop performance} 
			\label{fig:cp}
		\end{center}
	\end{figure}
	As shown in Fig~\ref{fig:cp}, the performance of the $g^2$SOC method closely aligns with the optimal solution, essentially achieving near-optimal feedback control. This suggests that the ideal self-optimizing controlled variables have been identified.
	The top and middle subfigs depict the constraint violation, with both the $g^2$SOC method and the optimal solution exhibiting minimal deviations from the constraint boundaries across the varying feed rate ($F_A$). This adherence to constraints is crucial for ensuring safe and reliable operation.
	The bottom subfig shows the control input, further reinforcing the similarity between the $g^2$SOC method and the optimal solution, with their respective control actions exhibiting near-identical profiles over the considered range of feed rates.
	Overall, the graphical comparison unequivocally demonstrates the remarkable performance of the proposed $g^2$SOC method, which seamlessly emulates the ideal optimal feedback control strategy, effectively realizing the self-optimizing control envisioned for this process.

	\begin{table}[]
		\centering
		\caption{Performance indices}
		\label{tab:indices}
		\begin{tabular}{ll}
			\hline
			Index                  & Description                       \\ \hline
			$T_{\mathrm{total}}$(min) & Total time required to train CVs offline                       \\
			$T_{\mathrm{train}}$(min) & Time required to training CVs using the optimization algorithm \\
			$T_{\mathrm{sample}}$(min)  & Time required to generate samples \\
			$Loss_{\mathrm{mean}}$ & Average Economic Loss             \\
			$Loss_{\mathrm{max}}$  & Maximum Economic Loss            \\
			$PO_{\mathrm{max}}$       & Max percentage overshoot of constraints in the violated cases \\ \hline
		\end{tabular}
	\end{table}
	
	In this section, the performance of the optimization-based approach, regression approach and the proposed approaches are compared. The performances of each approach were analyzed through the performance indices listed in Table \ref{tab:indices}.
	The computational resources utilized for the case study are specified, comprising a 64-bit Windows 10 computer with 8 GB of RAM and a 2.9 GHz Intel Core i7-10700F processor. The software packages employed include MATLAB R2023a and the CasADi algorithmic differentiation version 3.5.5\cite{andersson2019}, which incorporates the IPOPT package\cite{biegler2009largeIPOPT}.
	 For the optimization-based approach, the IPOPT solver from the CasADi package was employed. The objective was to minimize the average cost over the one hundred disturbance scenarios used for training, by solving the optimization problem represented by Eq. \eqref{eq:soc}. However, due to the non-convex nature of the problem, it is challenging to obtain a satisfactory solution in practice. In the case of this study, a sufficiently good solution was not successfully obtained. Subsequently, the performances of the regression method and the three components proposed in this work are comparatively evaluated and the results are presented in Table \ref{tab:case1}. 
	 When evaluating the performance of the controlled variables obtained by optimizing the objective function $L_w+\lambda L_a$, the hyperparameter $\lambda = 0.001$ was employed. For assessing the performance of the controlled variables resulting from the objective function $L_w+\lambda L_a + \mu L_u$, the hyperparameters $\lambda= 0.0005, \mu=0.0005$ were utilized.
	 {\begin{remark}[Guidelines for hyperparameter selection ]
	 	The regular term is a soft constraint and is not required to be satisfied, empirically, small numbers (positive numbers less than 1) are usually chosen, and the hyperparameter is usually chosen with reference to the orders of magnitude of $L_w$, $L_a$, and $L_u$, so that all three end up with similar orders of magnitude, so as not to optimize only one of them.
	 \end{remark}}
	 
	\begin{table}[]
		\centering
		\caption{Comparison of performance indices in case 1}
		\label{tab:case1}
		\begin{tabular}{lrrrrrr}
			\hline
			Method &
			$T_{\mathrm{total}}$(min) &
			$T_{\mathrm{train}}$(min) &
			$T_{\mathrm{sample}}$(min) &
			$Loss_{\mathrm{mean}}$ &
			$Loss_{\mathrm{max}}$ &
			$PO_{\mathrm{max}}$ \\ \hline
			Optimization method &
			5756.74 &
			5756.74 &
			0 &
			\multicolumn{1}{c}{-} &
			\multicolumn{1}{c}{-} &
			\multicolumn{1}{c}{-} \\
			$L_o$                       & 1.62 & 1.57 & 0.05 & 0.0437 & 0.3022 & 0.87\% \\
			$L_w$                       & 1.81 & 1.67 & 0.15 & 0.0079 & 0.0679 & 0.11\% \\
			$L_w+\lambda L_a$           & 1.89 & 1.75 & 0.15 & 0.0057 & 0.0881 & 0.17\% \\
			$L_w+\lambda L_a + \mu L_u$ & 2.74 & 2.59 & 0.15 & 0.0042 & 0.0333 & 0.19\% \\ \hline
		\end{tabular}
	\end{table}
	From the overall time spent on offline training CV, the time required for the optimization method is the longest, taking thousands of times as long as the regression method and the method proposed in this paper for the same number of iterations. Simultaneously, it is worth mentioning that in case 1, the problem scale is relatively small, and the disturbances and control inputs are one-dimensional. Even in such a small-scale instance, the time required is excessively prolonged, and the numerical calculations are exceedingly intricate, making the expansion to large-scale problems virtually impossible. 
	
	Considering $Loss_{\mathrm{mean}}$, the effect of weighting is most pronounced, with the weighted average economic loss significantly diminished. Both the achievability and uniqueness regularizers have also played a role, reducing the average loss. Additionally, it can be observed that the uniqueness regularizer has a comparatively noticeable effect on minimizing $Loss_{\mathrm{max}}$. Regardless of the methods employed, none have achieved perfect constraint satisfaction, but the quantity of constraint violations $PO_{\mathrm{max}}$ is minuscule, which can be addressed by setting a certain amount of backoff or employing a switching control structure method to satisfy the constraints \cite{ye2023}.

	{This case study illustrates:
	\begin{itemize}
		\item The computational challenges faced by the optimization-based method, particularly the long solution times even for small-scale problems.
		\item The limitations of the pure regression-based approach in terms of control performance.
		\item The balanced performance of our proposed OGCVL method in terms of both computational efficiency and control performance.
	\end{itemize}}	
	
	\subsection{Case 2: Gold Cyanidation Leaching Process}
	This example pertains to the gold cyanidation leaching process at a certain hydrometallurgical plant \cite{yeRealTimeOptimizationGold2017}. The leaching process is a continuous operation, involving four overflow-connected air agitation leaching tanks. Sodium cyanide serves as the leaching agent, and compressed air is continuously blown into the leaching tanks by fans, providing the dissolved oxygen required for the leaching process while also generating air agitation to promote more thorough leaching reactions. The primary reaction is:
	
	$$ 4 \mathrm{Au}+8 \mathrm{CN}^{-}+\mathrm{O}_2+2 \mathrm{H}_2 \mathrm{O} \rightarrow 4 \mathrm{Au}(\mathrm{CN})_2^{-}+4 \mathrm{OH}^{-} $$
	
	The mechanistic model of the leaching process can be established based on the law of mass conservation, including the conservation of gold elements in the solid and liquid phases, the conservation of cyanide ions, and the reaction rate equations, namely:
	
	\begin{equation} 
		\label{eq:dyCase2}
		\begin{aligned} 
			& \frac{\mathrm{d} C_{\mathrm{s}, i}}{\mathrm{~d} t}=\frac{Q_{\mathrm{s}, i}}{M_{\mathrm{s}, i}}\left(C_{\mathrm{s}, i-1}-C_{\mathrm{s}, i}\right)-r_{\mathrm{Au}, i} \\ 
			& \frac{\mathrm{d} C_{\mathrm{l}, i}}{\mathrm{~d} t}=\frac{Q_{\mathrm{l}, i}}{M_{\mathrm{l}, i}}\left(C_{\mathrm{l}, i-1}-C_{\mathrm{l}, i}\right)+\frac{M_{\mathrm{s}, i}}{M_{\mathrm{l}, i}} r_{\mathrm{Au}, i} \\ 
			& \frac{d C_{\mathrm{CN}, i}}{d t}=\frac{Q_{\mathrm{l}, i}}{M_{\mathrm{l}, i}}\left(C_{\mathrm{CN}, i-1}-C_{\mathrm{CN}, i}\right)+\frac{Q_{\mathrm{CN}, i}}{M_{\mathrm{l}, i}}-r_{\mathrm{CN}, i} \\
			& r_{\mathrm{Au}, i}=k_1\left(C_{\mathrm{s}, i}-C_{\mathrm{s \infty}}\right)^{k_2} C_{\mathrm{CN}, i}^{k_3} C_{\mathrm{o}, i}^{k_4} \\ 
			& r_{\mathrm{CN}, i}=k_5 C_{\mathrm{CN}, i}^{k_6} \\
			& C_{\mathrm{s} \infty}=0.357\left(1-1.49 \mathrm{e}^{-0.0176 \bar{d}}\right)\\
			& Q_{\mathrm{s}, i}=Q_{\mathrm{s}, 0} \\
			& Q_{\mathrm{l}, i}=Q_{\mathrm{l},0} \\
			& Q_{\mathrm{l}, i}=\frac{1-C_{\mathrm{w}}}{C_{\mathrm{w}}} Q_{\mathrm{s}, i} \\
			& V=\frac{M_{\mathrm{s}, i}}{\rho_{\mathrm{s}}}+\frac{M_{1, i}}{\rho_1} \\
			& \frac{M_{\mathrm{s}, i}}{M_{1, i}}=\frac{Q_{\mathrm{s}, i}}{Q_{\mathrm{l}, i}} \\
			& \tau_i=\frac{V}{\frac{Q_{s, i}}{\rho_{\mathrm{s}}}+\frac{Q_{\mathrm{l},i}}{\rho_1}}=\frac{M_{1, i}}{Q_{\mathrm{l}, i}}=\frac{M_{\mathrm{s}, i}}{Q_{\mathrm{s}, i}}
		\end{aligned}
	\end{equation}
	where $ i=1,\dots,4$ denotes $i$-th stage, and $i =0$ denotes an initial value.			
	The meanings and nominal values of each symbol in the process are introduced in Table \ref{table:case2P}.
	\begin{table}
		
		\caption{Parameter Values of GCLP}
		\centering
		\begin{tabular}{lll}
			\hline 
			parameter variable & description & value \\
			\hline
			$C_{\mathrm{CN}, 0}$ & initial cyanide concentration in liquid & $200  \mathrm{mg} / \mathrm{kg}$  \\
			$C_{\mathrm{l}, 0}$ & initial gold concentration in liquid & $0 \mathrm{mg} / \mathrm{kg}$ \\
			$C_{\mathrm{s}, 0}$ & initial gold concentration in ore & $100 \mathrm{mg} / \mathrm{kg}$ \\
			$C_{\mathrm{o}}$ & oxygen concentration in solution & $7 \mathrm{mg} / \mathrm{kg}$ \\
			$C_{\mathrm{w}}$ & solid concentration in the pulp & $0.39 \mathrm{~kg} / \mathrm{kg}$ \\
			$P_{\mathrm{Au}}$ & gold price & $0.226 \mathrm{CNY} / \mathrm{mg}$ \\
			$P_{\mathrm{CN}}$ & cyanide price & $15 \times 10^{-6} \mathrm{CNY} / \mathrm{mg}$ \\
			$P_{\mathrm{CNd}}$ & price for cyanide destruction & $2.5 \times 10^{-6} \mathrm{CNY} / \mathrm{mg}$ \\
			$Q_{\mathrm{s}}$ & ore flow rate & $2500 \mathrm{~kg} / \mathrm{h}$ \\
			$V$ & reactor holdup & $68  \mathrm{~m}^3$ \\
			$\bar{d}$ & average size of the ore particles & $139 \mu \mathrm{m}$ \\
			$k_1$ & kinetic parameter & 0.0011 \\
			$k_2$ & kinetic parameter & 2.13 \\
			$k_3$ & kinetic parameter & 0.961 \\
			$k_4$ & kinetic parameter & 0.228 \\
			$k_5$ & kinetic parameter & $3.6821 \times 10^{-9}$ \\
			$k_6$ & kinetic parameter & 3.71 \\
			$\rho_{\mathrm{s}}$ & ore density & $2.8 \mathrm{~g} / \mathrm{cm}^3$ \\
			$\rho_1$ & liquid density & $1 \mathrm{~g} / \mathrm{cm}^3$ \\
			$a^*$ & minimal gold recovery rate & $90 \%$ \\
			$Q_{\mathrm{CN}, \text { max }}$ & maximal cyanide supply & $10 \mathrm{~kg} / \mathrm{h}$ \\
			\hline
		\end{tabular}
		\label{table:case2P}
	\end{table}
	
	The objective of this process is to minimize a cost function $J$ while ensuring that the gold recovery rate $a$ is greater than a minimal level $a^*$ by adjusting flow rates $\mathbf{u}=[Q_{\mathrm{CN}, 1},Q_{\mathrm{CN}, 2},Q_{\mathrm{CN}, 3},Q_{\mathrm{CN}, 4}]$.
	The optimization problem is formulated as follows:
	\begin{equation}
		\begin{array}{ll} 
			& \min_{\mathbf{u}} J=P_{\mathrm{CN}}\left(\sum_{i=1}^4 Q_{\mathrm{CN}, i}+C_{\mathrm{CN}, 0} Q_{\mathrm{l},0}\right) \\
			& +P_{\mathrm{CNd}} Q_{\mathrm{l}, n}*C_{\mathrm{CN},4}+P_{\mathrm{Au}} Q_{\mathrm{s}, n} C_{\mathrm{s}, n} \\
			\text { s.t. } & \text {process model: }\eqref{eq:dyCase2} \\
			& 0 \leq Q_{\mathrm{CN}, i} \leq Q_{\mathrm{CN}, \max }, \quad \forall i=1, \ldots, 4 \\
			& a = \dfrac{C_{\mathrm{s},0}-C_{\mathrm{s},4}}{C_{\mathrm{s},0}} \geq a^* 
		\end{array}
	\end{equation}
	The expected disturbances for the GCLP encompass parametric uncertainties, wherein the uncertain kinetic parameters $k_1-k_6$ are randomly distributed within ±20\% of their nominal values, and the gold price $P_{Au}$ varies within the range of 190-280 CNY/g.
	
	This case involves a large-scale system with numerous state variables, exhibiting strong non-linear dynamics, subject to high-dimensional and broad-ranging disturbances, encompassing multiple degrees of freedom. Employing optimization-based techniques would typically be infeasible. Subsequently, the viability of the $g^2$SOC methodology for this large-scale case will be elucidated.
	
	In this example, the considered measurement variables encompass flow rate, the gold concentration in the solid phase, gold concentration in the liquid phase, and cyanide concentration in the liquid phase across four stages, as well as the gold price.
	$$\mathbf{y} = \left[ C_{\mathrm{s}, i},C_{\mathrm{l}, i},C_{\mathrm{CN}, i},Q_{\mathrm{CN}, i},P_{Au}\right],i=1,\dots,4$$
	
	Firstly, 2500 sample scenarios were generated via Monte Carlo sampling under the disturbances $P_{Au}$ and $k_1-k_6$. Subsequently, the optimization problem was solved individually for these 2500 scenarios, yielding the optimal control inputs, and simulations were performed to obtain the corresponding optimal measurement data. Thereby, the dataset was constructed. The initial 2400 scenarios in the dataset were employed for training the self-optimizing controlled variables, while the remaining 100 scenarios were reserved for testing purposes. The controlled variables were trained using Algorithm~\ref{algo:OptSOC}.
	The neural network architecture with three hidden layers, each comprising 15 neurons with ReLU, was employed.
	
	\begin{table}[]
		\centering
		\caption{Comparison of performance indices in case 2}
		\label{tab:case2}
		\begin{tabular}{lrrrrr}
			\hline
			& $T_{\mathrm{total}}$(min) & $T_{\mathrm{train}}$(min) & $T_{\mathrm{sample}}$(min) & $Loss_{\mathrm{mean}}$ & $Loss_{\mathrm{max}}$ \\ \hline
			$L_o$                       & 2.90  & 2.38  & 0.52 & 0.0884 & 1.2519 \\
			$L_w$                       & 6.44  & 4.27  & 2.17 & 0.0405 & 0.5133 \\
			$L_w+\lambda L_a$           & 33.18 & 31.01 & 2.17 & 0.0334 & 0.9442 \\
			$L_w+\lambda L_a + \mu L_u$ & 50.93 & 48.75 & 2.17 & 0.0249 & 0.1272 \\ \hline
		\end{tabular}
	\end{table}
	As shown in Table \ref{tab:case2}, a comparison was drawn with regression-based techniques, as well as the three key components proposed in Objective-guided controlled variables Learning for $g^2$SOC, namely, the weighted, feasibility and uniqueness regularizers. An examination of $Loss_{\mathrm{mean}}$ and $Loss_{\mathrm{max}}$ revealed consistent trends with Case 1, wherein the weighted approach significantly reduced closed-loop economic losses, while the feasibility and uniqueness regularizers effectively mitigated closed-loop losses. Notably, the uniqueness regularizer exhibited a pronounced impact on minimizing maximum economic losses. Concurrently, as the dimensionality of the control inputs increased relative to Case 1, the feasibility and uniqueness regularizers led to a substantial rise in training time. Nevertheless, compared to optimization-based methods, the proposed approach retained its advantageous position.
	
	The aforementioned experiments were conducted from a steady-state perspective, assuming that SOC could effectively control the CVs during a transient phase. To substantiate its feasibility, dynamic simulations are provided. With four degrees of freedom, four PI controllers are employed for individual control in this case. 
	
	In this case, measurement noise are considered as zero-mean Gaussian noise with a standard deviation of 2\textperthousand of the nominal process variable values. The impact of measurement noise on CVs is addressed through filtered measurements. Specifically, an "exponential moving average" filter is utilized, formulated as follows: 
	\begin{equation}
		c_k=\alpha c_{m, k}+(1-\alpha) c_{k-1}, \quad \text{ where }  \alpha=\frac{1}{1+\tau_F / \Delta t_s}
	\end{equation}
	where $\Delta t_s$ is the sampling time, $\tau_F$ is the filter time. In this case, $\Delta t_s = 1$ min, $\tau_F = 999$ min.
	
	The dynamic simulation arrangement is as follows: Initially, the system operates under nominal conditions S1; every 100 h, the GCLP operating conditions are altered to S2 and S3, respectively. The three disturbance scenarios are presented in Table~\ref{tab:disSce}. 
	\begin{table}[]
		\centering
		\caption{Three Disturbance Scenarios}
		\label{tab:disSce}
		\begin{tabular}{lccccccc}
			\hline
			Disturbance   scenario & $k_1$    & $k_2$  & $k_3$   & $k_4$   & $k_5$               & $k_6$  & ${P_{\mathrm{Au}}}$ \\ \hline
			$S_1$(nominal)           & 0.0011  & 2.13 & 0.961 & 0.228 & $3.58\times10^{-9}$ & 3.71 & 0.226             \\
			$S_2$                    & 0.0013  & 2.04 & 0.813 & 0.19  & $3.48\times10^{-9}$ & 4.2  & 0.204             \\
			$S_3$                    & 0.00097 & 1.91 & 0.781 & 0.219 & $3.20\times10^{-9}$ & 3.59 & 0.276             \\ \hline
		\end{tabular}
	\end{table}
	The dynamic response of SOC, as illustrated in Figure \ref{fig:dynamic}, 
	demonstrates that when the controlled variables generated by the OGCVL's $g^2$SOC approach are kept at zeroes, their economic performance closely approximates the optimal scenario, aligning with the conclusions drawn from the previous steady-state analyses.
	
	\begin{figure}
		\centering
		\begin{subfigure}[b]{1\textwidth}
			\includegraphics[width=\textwidth]{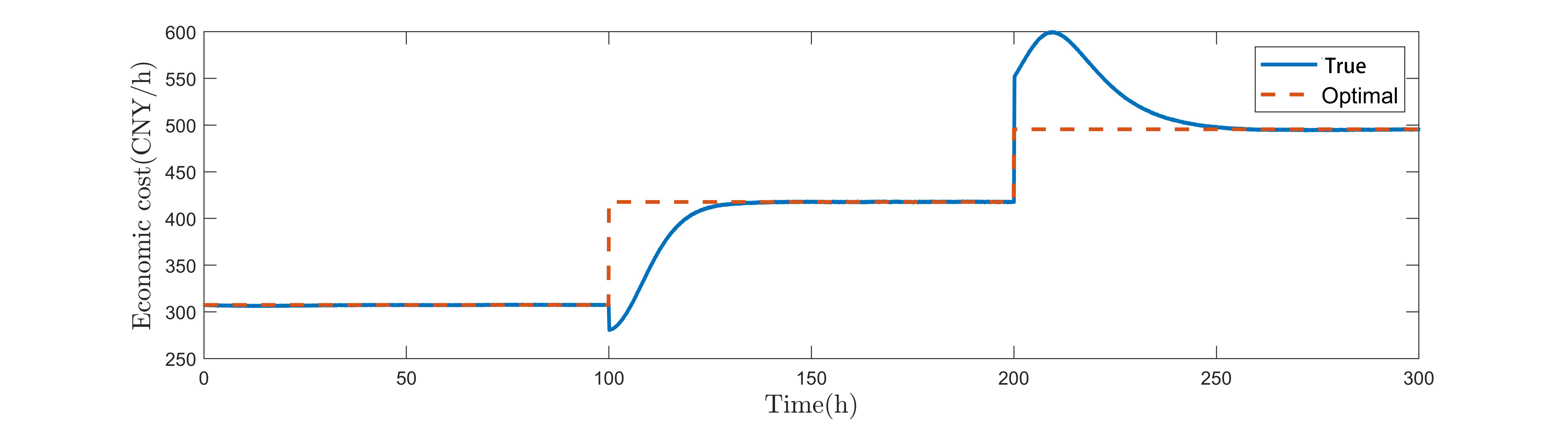}
			\caption{}
		\end{subfigure}
		\begin{subfigure}[b]{0.45\textwidth}
			\includegraphics[width=\textwidth]{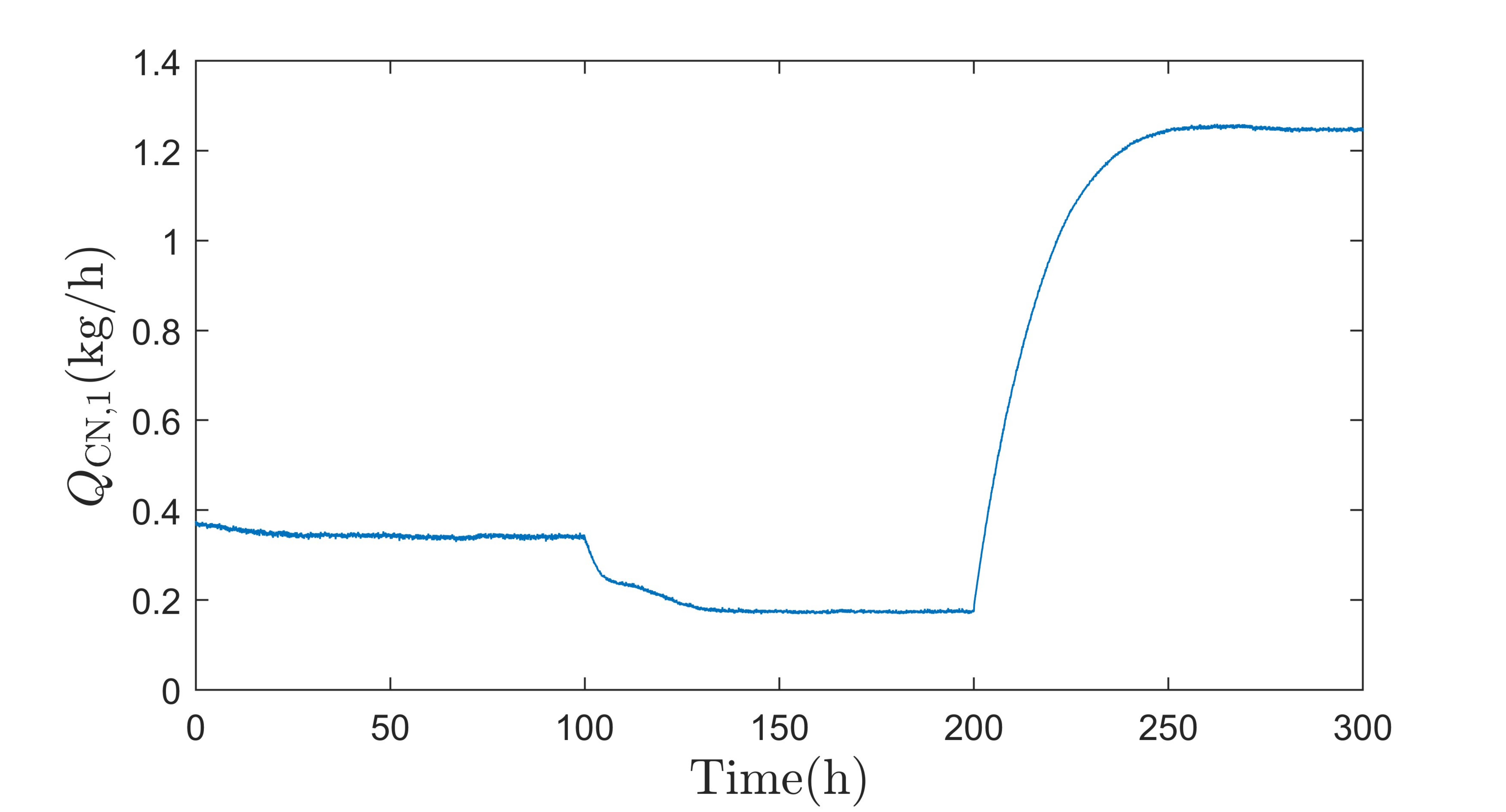}
			\caption{}
		\end{subfigure}
		\begin{subfigure}[b]{0.45\textwidth}
			\includegraphics[width=\textwidth]{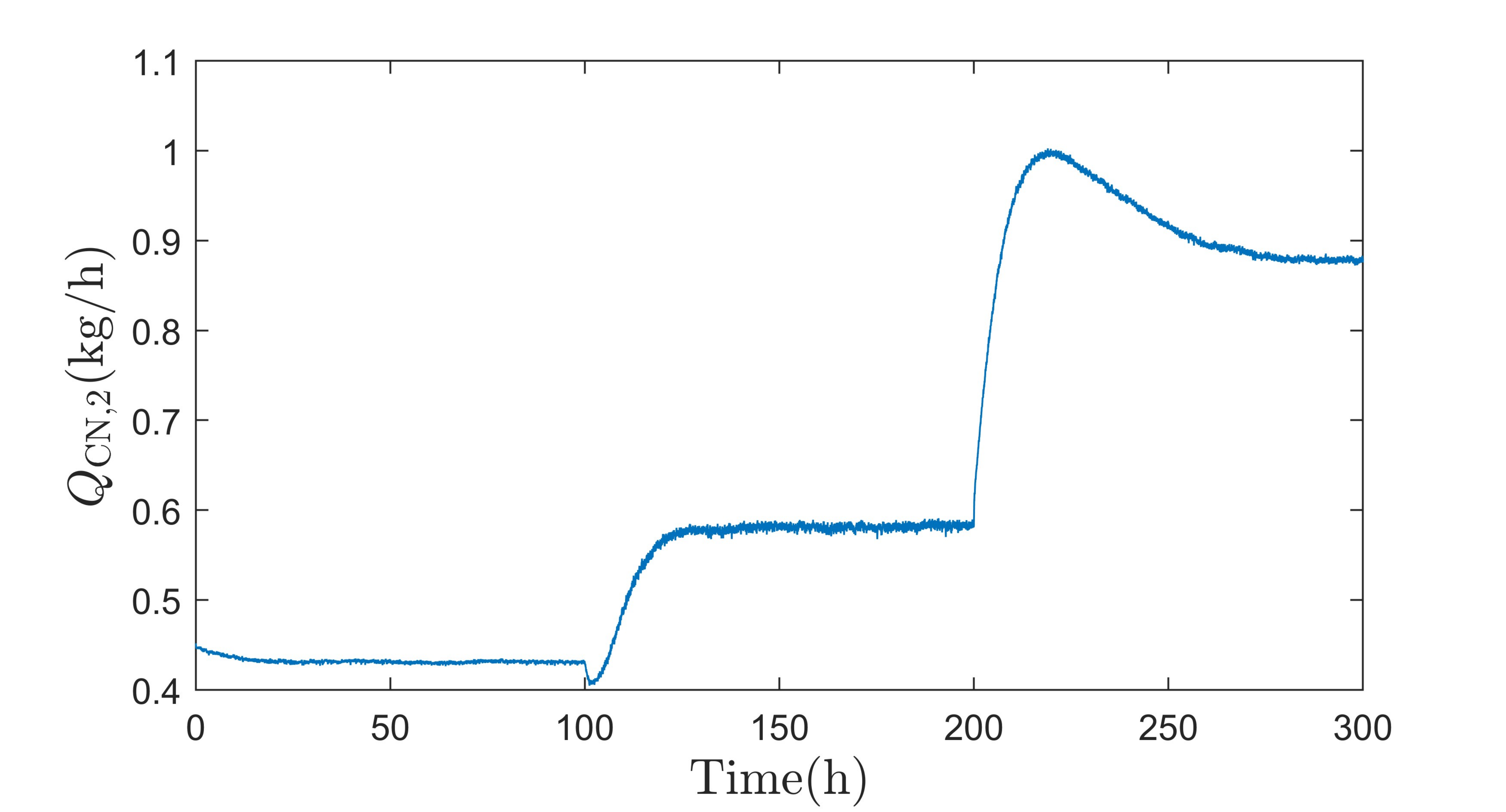}
			\caption{}
		\end{subfigure}
		\begin{subfigure}[b]{0.45\textwidth}
			\includegraphics[width=\textwidth]{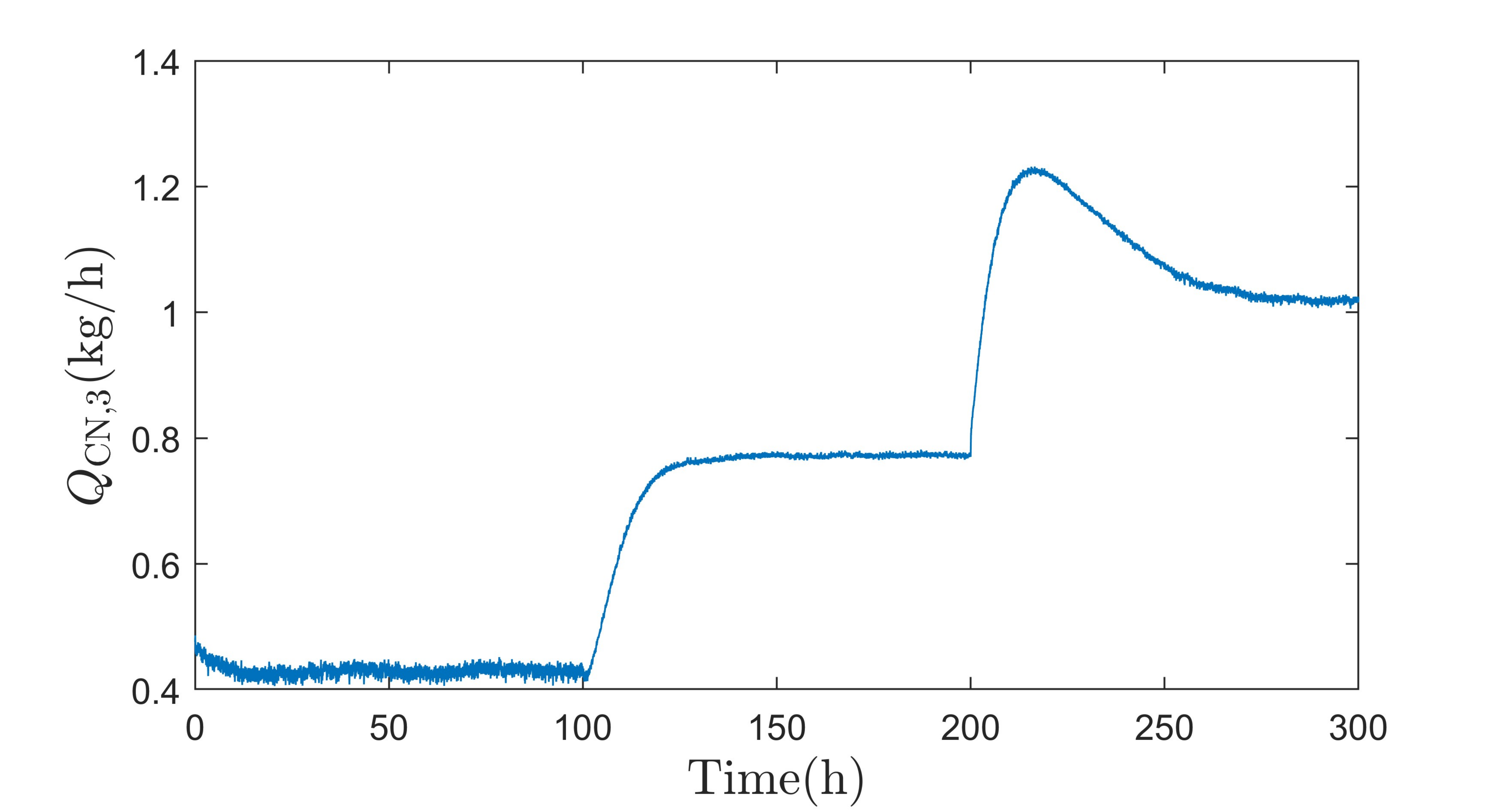}
			\caption{}
		\end{subfigure}
		\begin{subfigure}[b]{0.45\textwidth}
			\includegraphics[width=\textwidth]{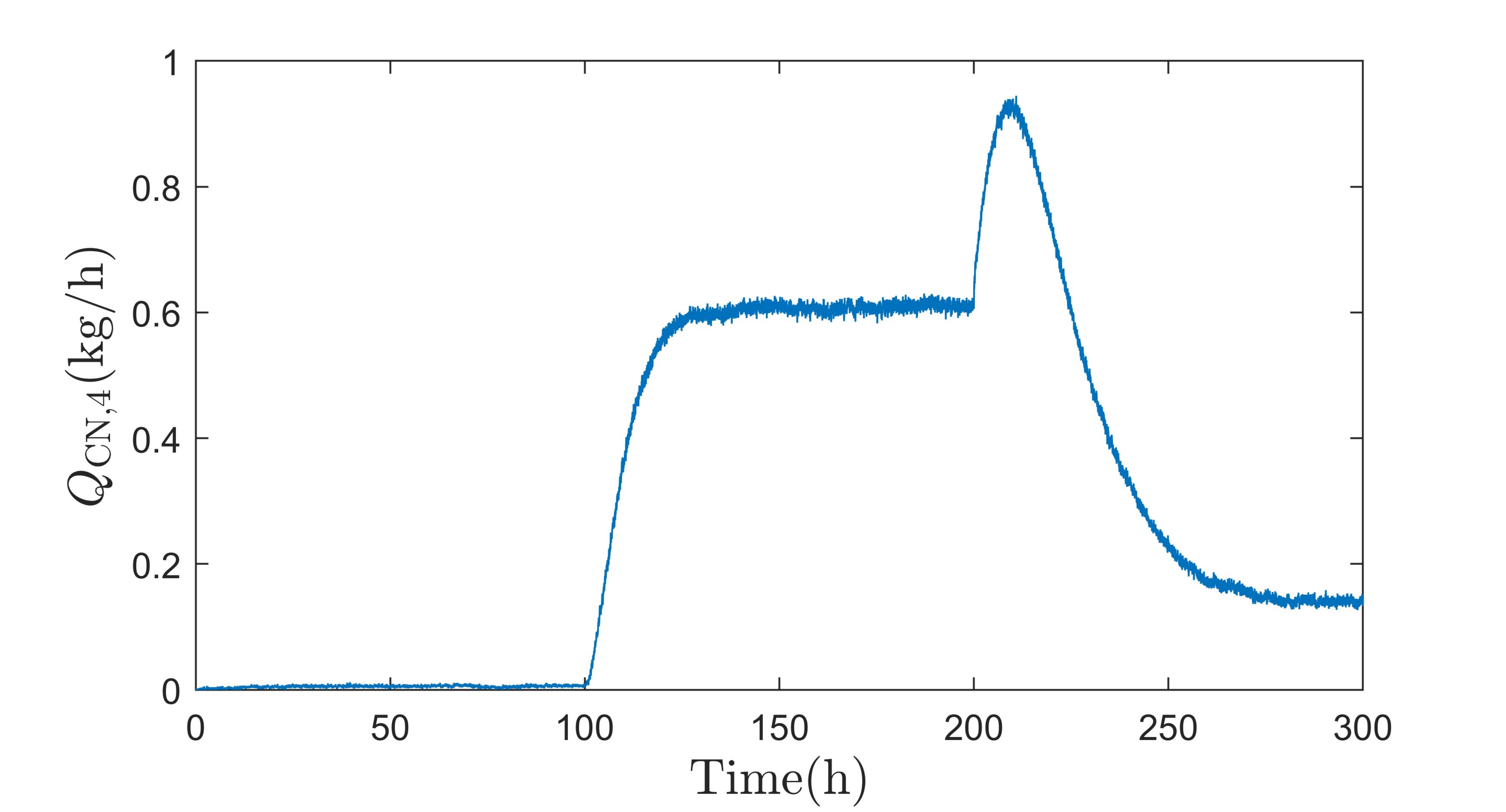}
			\caption{}
		\end{subfigure}
		\caption{Dynamic simulation for the SOC scheme (for every 100 h, the system is operated under S1, S2, and S3, respectively).
		}
		\label{fig:dynamic}
	\end{figure}
	
	{This case study demonstrates:
	\begin{itemize}
		\item The scalability of our proposed OGCVL method to complex, real-world problems.
		\item The method's ability to handle cases where traditional optimization-based approaches become computationally intractable.
		\item The robustness and effectiveness of OGCVL in maintaining near-optimal performance across a wide range of operating conditions in a more challenging, high-dimensional problem space.
	\end{itemize}}
	
	\section{Conclusion and outlooks}	
	\label{sec:con}
	{This work, as Part II of our study on generalized global Self-Optimizing Control ($g^2$SOC), builds upon the theoretical foundations established in Part I \cite{ye2023}. While Part I proved the existence of perfect global self-optimizing controlled variables and proposed two numerical approaches for their approximation, this work addresses the practical challenges in implementing these approaches and presents a novel framework that combines their strengths.
	The Objective-guided Controlled Variable Learning (OGCVL) framework presented here represents a significant advancement in the design of controlled variables (CVs) for self-optimizing control (SOC). By integrating domain knowledge from first-principles process models into the data-driven training process, OGCVL bridges the gap between symbolic and numerical computation techniques. This framework successfully addresses the limitations identified in Part I, particularly the computational inefficiency of the optimization-based approach and the potential suboptimality of the pure regression-based approach.}
	
	{Key improvements over Part I include:
	\begin{itemize}
		\item Enhanced computational efficiency, especially for large-scale problems, while maintaining the ability to capture complex nonlinear relationships.
		\item A systematic approach that balances the trade-off between optimality and computational tractability.
	\end{itemize}}
	
	{The case studies demonstrate the feasibility and effectiveness of the OGCVL algorithm in both small-scale and larger-scale scenarios, validating its potential for practical implementation. 
	{Methods such as model predictive control policy approximation \cite{zhou2024c} can be considered as regression-based methods, and research on this area can draw on the methods proposed in this paper to enhance their performance.}	
	However, it is important to acknowledge the limitations and areas for further research:
	\begin{itemize}
		\item Extension to dynamic optimization problems: The current work, like Part I, focuses on steady-state optimization. Future research should investigate the application of $g^2$SOC and OGCVL to dynamic systems.
		\item Comparative analysis of regularization methods: Further study is needed to compare the gradient-based regularization used in OGCVL with alternative approaches, such as the comparative learning method mentioned in \cite{zhou2024b}.
		\item {Typical measurement limitations: Online measurement of some key physical properties can be a challenge, such as concentration. And when such measurements are possible, they often suffer from high uncertainty and potential gross errors. The method proposed in this paper does not consider the effect of measurement noise when designing the controlled variable, and in future research, strategies such as using data with noise to train the controlled variable can be considered.}
		\item {Exploration of more complex industrial cases: While we have shown the usability of the proposed method in larger scale problems, testing on a wider range of complex, real-world processes would further validate the method's robustness. At the same time, for system-scale problems, it may be necessary to consider a combination of techniques such as parallelization strategies or problem preprocessing techniques.}
		\item {Constraint Satisfaction Under Model Mismatch : An important area for future investigation is the development of methodologies to ensure constraint satisfaction when faced with model-plant mismatch. There have been a number of studies focusing on such issues, such as \cite{krishnamoorthy2020}}
		\item {Addressing practical implementation challenges, particularly the development of hybrid approaches that can leverage both model-based and data-driven methods when high-fidelity models are not readily available. This extension would enhance the framework's applicability across a broader range of industrial scenarios where model accessibility may be limited.}
	\end{itemize}}
	
	This work not only advances the theoretical foundations of self-optimizing control but also offers practical implications for industry, potentially revolutionizing current practices in real-time optimization while providing a framework adaptable to various process industries beyond chemical engineering, particularly in scenarios requiring economic optimization under uncertainty.
	
	{In conclusion, the OGCVL framework represents a significant step forward from Part I in the practical realization of $g^2$SOC. It offers a promising approach for designing high-performance nonlinear CVs, addressing key challenges identified in our previous work. However, continued research efforts are essential to further enhance its capabilities and broaden its applicability across diverse process systems.}

	\bibliography{biblio}
	
	\newpage
	\begin{figure}[h]
		\centering
		\includegraphics[width=0.9\textwidth]{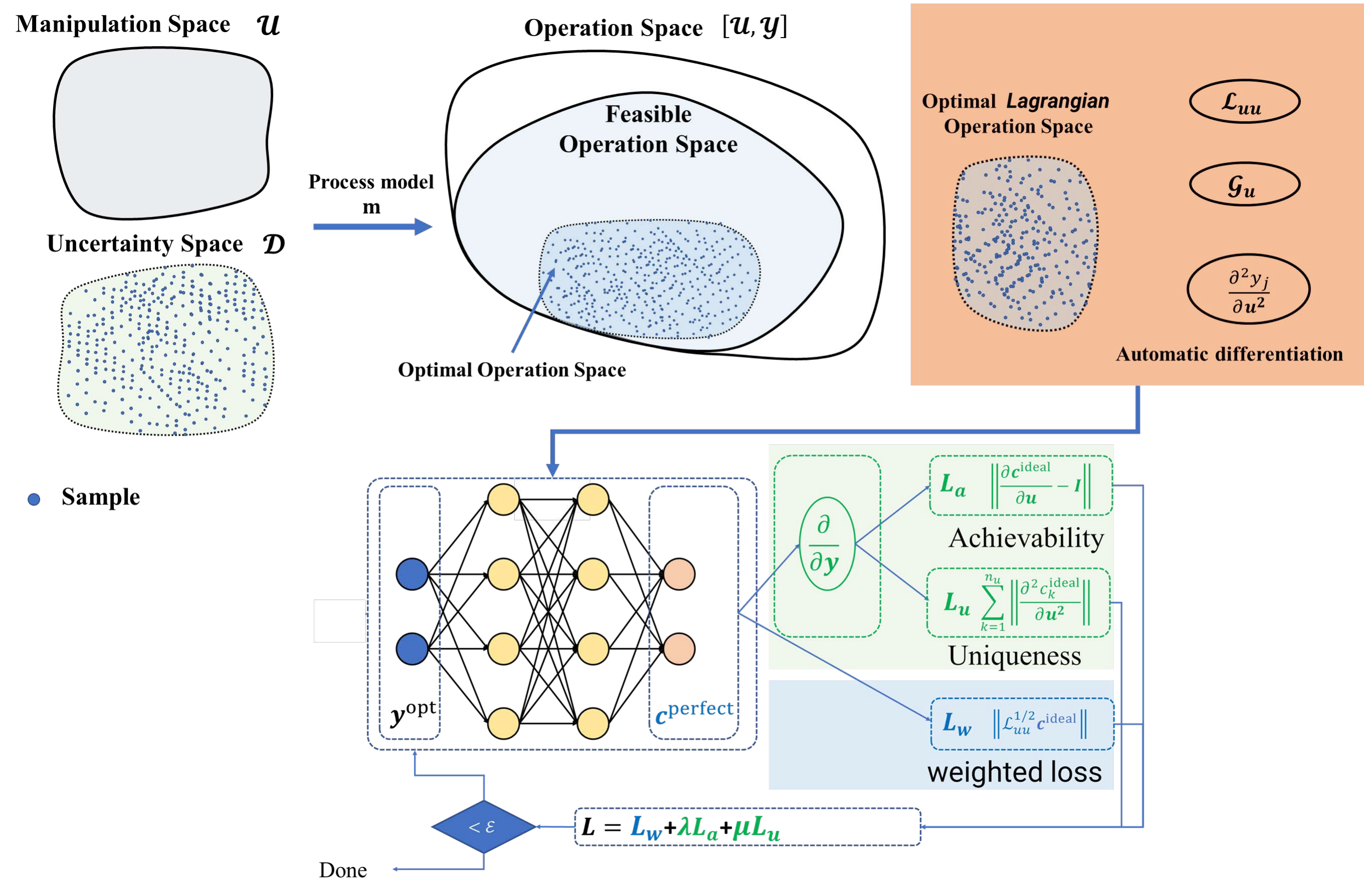} 
	\end{figure}
	
\end{document}